\newcommand{\id}{\mathbbm{1}}\newcommand{\N}{\mathbb{N}}
\newcommand{\Z}{\mathbb{Z}}\newcommand{\F}{\mathbb{F}}
\newcommand{\Lf}{\mathfrak{f}}
\newcommand{\Lm}{\mathfrak{m}}
\newcommand{\map}[3]{ #1 : #2 \longrightarrow #3 }
\newcommand{\mapl}[5]{ #1 : #2 \longrightarrow #3 : #4 \longmapsto #5 }
\newtheorem{theorem}{Theorem}[section]
\newtheorem{proposition}[theorem]{Proposition}
\newtheorem*{fakeproposition}{Proposition}
\newtheorem{corollary}[theorem]{Corollary}
\newtheorem*{corollary*}{Corollary}
\newtheorem*{fakecorollary}{Corollary}
\newtheorem{lemma}[theorem]{Lemma}
\newtheorem*{theorem*}{Theorem}
\newtheorem*{faketheorem}{Theorem}
\newtheorem*{proposition*}{Proposition}
\newtheorem*{lemma*}{Lemma}
\newtheorem*{problem*}{Problem}
\newtheorem*{observation*}{Observation}
\newtheorem{question}[theorem]{Question}
\theoremstyle{definition} \newtheorem{remark}[theorem]{Remark}
\theoremstyle{definition} \newtheorem{example}[theorem]{Example}
\theoremstyle{definition}\newtheorem{definition}[theorem]{Definition}
\title{Arithmetically-free group-gradings of Lie algebras: II}
\author{Wolfgang Alexander Moens\thanks{This research was supported by the Erwin Schr\"odinger Junior Fellowship (Grant XXXX), and the Austrian Science Foundation FWF (Grant J3371-N25).}}
\date{\today}
\begin{document}

\maketitle

\abstract{We study Lie algebras $L$ that are graded by an arbitrary group $(G,\ast)$ and have finite support, $X$. We show that $L$ is nilpotent of $|X|$-bounded class if $X$ is arithmetically-free. Conversely: if a finite subset $Y$ of $G$ is \emph{not} arithmetically-free, then $Y$ supports the grading of a non-nilpotent Lie algebra.}

\tableofcontents

\section{Introduction}

In this paper we consider gradings of Lie algebras $L$ by arbitrary groups $(G,\ast)$. We recall that such a grading is a decomposition $L := \bigoplus_{g \in G} L_g$ of $L$ into homogeneous subspaces, such that for all $g,h \in G$, we have $$ [L_g,L_h] \subseteq L_{g \ast h} .$$ A typical example of such a grading is the eigenspace decomposition of $L$ with respect to an automorphism or a derivation. Classical results in the literature conclude that, under reasonable conditions on the support $X := \{ g \in G | L_g \neq \{ 0 \} \}$ of the grading, the algebra $L$ must be nilpotent (of bounded class). We refer to the bibliography for examples of, and partial answers to, the following questions:

\begin{question} \label{QuestionArithmeticallyFree} Which properties of the support, $X$, guarantee that $L$ is nilpotent, or nilpotent of $|X|$-bounded class? \end{question}

\begin{question} \label{QuestionNotArithmeticallyFree} Which properties of a set $Y$ guarantee that it supports some non-nilpotent Lie algebra? \end{question}

In order to address these questions, we have introduced \emph{arithmetically-free} subsets of groups,  \cite{MoensPreprint}. A subset $X$ of an abelian group $(G,+)$ is arithmetically-free, iff $X$ is finite and $x,x + y,x + 2y,x + 3y,\ldots \subseteq X$ implies $y \not \in X$. More generally: a subset $X$ of an arbitrary group is arithmetically-free, iff $X$ is finite and every subset of $X$ of pairwise commuting elements is arithmetically-free. Perhaps the most obvious examples of arithmetically-free subsets of groups are subsets of $(\Z_p,+) \setminus \{ \overline{0} \}$ and finite subsets of $(\Z^m,+) \setminus \{ 0 \}$. \newline

If we restrict our attention to gradings with finite support, then the answers to \emph{Question} \ref{QuestionArithmeticallyFree} are given by theorem \ref{TheoremArithmeticallyFreeSupport}. We shall construct a \emph{generalised Higman map} $\map{H}{\N}{\N}$, such that:

\begin{faketheorem}[\ref{TheoremArithmeticallyFreeSupport}] Consider a Lie algebra $L$ that is graded by a group $G$. If the support $X$ of the grading is arithmetically-free, then $L$ is nilpotent of class at most $H(|X|)$.\end{faketheorem}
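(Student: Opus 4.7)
The plan is to show, more strongly, that every left-normed Lie bracket $w = [l_{g_1}, l_{g_2}, \ldots, l_{g_n}]$ with $l_{g_i} \in L_{g_i}$ and all $g_i \in X$ vanishes in $L$ once $n > H(|X|)$; this immediately yields nilpotency of class at most $H(|X|)$. The starting observation is that the intermediate bracket $[l_{g_1}, \ldots, l_{g_k}]$ has grade $\pi_k := g_1 \ast g_2 \ast \cdots \ast g_k$, so if $\pi_k \not\in X$ for some $k$, then $[l_{g_1}, \ldots, l_{g_k}] \in L_{\pi_k} = \{0\}$ and hence $w = 0$. One therefore reduces the theorem to a purely combinatorial statement: bounding the length of sequences $(g_1, \ldots, g_n) \in X^n$ all of whose partial products $\pi_k$ remain inside $X$.

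This is where the arithmetic-freeness of $X$ acts decisively. In the abelian case — all the $g_i$ pairwise commuting — the $\pi_k$ are genuine partial sums, and already a constant block $g, g, \ldots, g$ appearing in the sequence produces $\pi, \pi + g, \pi + 2g, \ldots$ all in $X$ with $g \in X$, in direct violation of the defining property of arithmetically-free sets; a van der Waerden / Schur-style refinement upgrades this constant-block argument to arbitrary sequences and yields a finite abelian threshold depending only on $|X|$. To reach the general, possibly non-abelian $G$, one extracts from any long sequence a long sub-sequence of pairwise commuting entries via a Ramsey argument on the $2$-coloring of index pairs by commutation in $G$, and applies the abelian bound to this sub-sequence — which is legitimate precisely because the definition of arithmetically-free demands the property on every commuting subset of $X$. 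The Jacobi identity is then used to transfer the vanishing on the commuting sub-sequence back to the original bracket, at the cost of introducing lower-order commutators that are handled by induction on length.

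The main obstacle is quantitative: the composition of the Ramsey extraction, the abelian arithmetic-free threshold, and the Jacobi-based induction forces $H(|X|)$ to grow at least tower-fast in $|X|$, already in the classical case $X = \Z_p \setminus \{\overline{0}\}$ where $H$ should recover Higman's original function. The delicate bookkeeping is to ensure that every Jacobi rearrangement strictly decreases some well-founded complexity — length, or number of partial products still inside $X$ — so the induction terminates, and that the final bound depends on $|X|$ alone and not on any ambient feature of $G$ beyond the commutation pattern that the arithmetically-free hypothesis already controls.
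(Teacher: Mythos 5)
Your first reduction is sound and matches the paper: it suffices to kill every left-normed homogeneous bracket of length $H(|X|)+1$, and a partial product $g_1\ast\cdots\ast g_k\notin X$ forces $[l_{g_1},\ldots,l_{g_k}]\in L_{g_1\ast\cdots\ast g_k}=\{0\}$ (this is Proposition \ref{PropositionInitial}). But the combinatorial statement you then reduce the theorem to --- that every sufficiently long sequence over an arithmetically-free $X$ must have some partial product outside $X$ --- is false, and no van der Waerden or Schur-style refinement can repair it. Take $G=\Z_p$ and $X=\Z_p\setminus\{\overline{0}\}$, which is arithmetically-free, and the sequence $\overline{1},\overline{1},-\overline{1},\overline{1},-\overline{1},\ldots$: all entries lie in $X$, and all partial sums lie in $\{\overline{1},\overline{2}\}\subseteq X$, for arbitrary length. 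So the class of sequences whose partial products stay inside $X$ admits no finite threshold, and the constant-block observation (which is the only concrete mechanism you offer for the abelian core) does not generalise.

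What the paper actually proves (Theorem \ref{TheoremCharacterisation}, the generalisation of Higman's Lemma 3) is a dichotomy: every sequence of length $H(|X|)+1$ either has an initial segment summing outside $X$, \emph{or} has an internal \emph{Lie-regular} segment summing outside $X$ --- a segment that is lexicographically minimal in its orbit under the permutations arising from a Jacobi linearisation. The second alternative does not annihilate the word directly; via Proposition \ref{PropositionIncrease} it rewrites the word as a linear combination of strictly lexicographically larger left-normed words of the same length, and one concludes by picking a maximal non-zero word of that length and deriving a contradiction. This ``regular segment plus maximality'' mechanism is the heart of the proof (and of the entire alphabet/hyper-derivation machinery of Section \ref{SectionCharacterisation}) and is absent from your proposal; your sketch has no way to handle sequences like the one above. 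Separately, the non-abelian case is handled in the paper by Lemma \ref{LemmaAbelianSupport}: nilpotency of class $\leq c$ is detected on the homogeneous subalgebras $L_H$ for abelian subgroups $H\leq G$, which sidesteps both your Ramsey extraction of a commuting subsequence and the dubious Jacobi ``transfer back'' step that extraction would force.
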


We offer some context. If the Lie algebra is spanned by finitely-many elements, then the nilpotency follows from Jacobson's theorem about weakly-closed sets of nilpotent operators on finite-dimensional vector spaces, \cite{Jacobson,JacobsonWeaklyClosed}. If the Lie algebra is generated by finitely-many elements, then the nilpotency is a consequence of Zel'manov's theorem for Lie algebras satisfying a polynomial identity in the context of the restricted Burnside problem, \cite{Efim}. We refer to remark \ref{RemarkEfim} for more details. \newline

Let us consider another special case -- a result that extends Thompson's solution of the Frobenius conjecture. In \cite{Higman}, Higman proved the existence of a map $\map{h}{\mathbb{P}}{\N}$ such that: 

\begin{theorem}[Higman] Consider a Lie ring $L$ that is graded by the simple group $(\Z_p,+)$. If the support does not contain $\overline{0}$, then $L$ is nilpotent of class at most $h(p)$. \end{theorem}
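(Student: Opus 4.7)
The plan is to derive Higman's theorem as a direct corollary of Theorem \ref{TheoremArithmeticallyFreeSupport}; everything reduces to the combinatorial observation that every subset of $\mathbb{Z}_p \setminus \{\overline{0}\}$ is arithmetically-free when $p$ is prime. Once that is in place, the bound on the nilpotency class is read off from the generalised Higman map $H$.

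To verify the combinatorial claim, let $X \subseteq \mathbb{Z}_p \setminus \{\overline{0}\}$. Then $X$ is finite (it sits inside the finite group $\mathbb{Z}_p$), and since $(\mathbb{Z}_p,+)$ is abelian the arithmetic-freeness condition reduces to its abelian form. Suppose $x, x+y, x+2y, \ldots \subseteq X$. If $y \neq \overline{0}$, then the primality of $p$ forces $y$ to generate the cyclic group $\mathbb{Z}_p$, so the progression $\{x + ky : k \in \N\}$ coincides with $\mathbb{Z}_p$. In particular $\overline{0} \in X$, contradicting the hypothesis. Hence $y = \overline{0}$, and since $\overline{0} \notin X$ we have $y \notin X$. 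This establishes arithmetic-freeness of $X$.

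Next, I would apply Theorem \ref{TheoremArithmeticallyFreeSupport} to the grading of $L$: it immediately yields that $L$ is nilpotent of class at most $H(|X|) \leq H(p-1)$. Setting $h(p) := H(p-1)$ produces the required map and finishes the proof.

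The main difficulty is of course absorbed entirely into Theorem \ref{TheoremArithmeticallyFreeSupport} itself; the reduction above is purely formal. A self-contained proof of Higman's theorem would have to imitate the combinatorial core of arithmetic-freeness directly in $\mathbb{Z}_p$ -- exploiting that no proper subset of $\mathbb{Z}_p$ can contain an infinite arithmetic progression with nonzero step -- which is exactly the specialisation that the notion of arithmetically-free supports abstracts. One minor technical point is that Higman states his result for Lie \emph{rings} while Theorem \ref{TheoremArithmeticallyFreeSupport} speaks of Lie algebras; since the arguments are insensitive to the coefficient ring and apply equally to $\Z$-Lie algebras, no adaptation is required.
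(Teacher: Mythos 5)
Your proposal is correct and follows essentially the same route the paper itself takes: it recovers Higman's theorem as the specialisation $G := \Z_p$, $X \subseteq \Z_p \setminus \{\overline{0}\}$ of Theorem \ref{TheoremArithmeticallyFreeSupport}, using that such $X$ is arithmetically-free (the paper's Example 2.2) and setting $h(p) := H(p-1)$, exactly as noted after Theorem \ref{TheoremCharacterisation}. Your remarks on the Lie-ring versus Lie-algebra issue are also apt, since the linearisation coefficients are all $\pm 1$.
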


We note that Higman used a combinatorial trick to prove the existence of the map $\map{h}{\mathbb{P}}{\N}$, without providing an upper bound for $h(p)$. On the other hand: the generalised Higman map $\map{H}{\N}{\N}$ will be the solution to an explicit recursion.  It does indeed grow very quickly, but if we assume that the support of the grading also admits a good-ordering (in the sense of Shalev, \cite{Shalev}), then we obtain a stronger upper bound. In \cite{MoensPreprint}, we prove:

\begin{theorem} Consider a Lie algebra $L$ that is graded by a group $G$. If the support $X$ is arithmetically-free and admits a good-ordering, then $L$ is nilpotent of class at most $1 + |X| + |X|^2 + |X|^3 + \cdots + |X|^{2^{|X|}}.$\end{theorem}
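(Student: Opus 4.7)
Let $n := |X|$. The plan is to bound the length of every iterated commutator of homogeneous elements that can a priori be nonzero, using the good-ordering to impose a normal form and arithmetic-freeness to cap the length.

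First I would use the Jacobi identity together with the good-ordering $\preceq$ on $X$ to rewrite every iterated commutator of homogeneous elements as a $\Z$-linear combination of \emph{sorted} commutators $[y_{i_1},\ldots,y_{i_\ell}]$ with $y_{i_k} \in L_{x_{i_k}}$, where the index word $(x_{i_1},\ldots,x_{i_\ell})$ lies in a restricted set of $\preceq$-normal forms. This is the standard collection procedure from free Lie theory; the role of the good-ordering is precisely to guarantee termination in the presence of a grading supported on $X$. For such a sorted commutator to be a priori nonzero, every partial product $x_{i_1} \ast \cdots \ast x_{i_k}$ must lie in $X$, so the admissible normal forms are exactly the sorted walks inside $X$ whose single-step transitions are themselves labelled by elements of $X$.

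The heart of the argument is a combinatorial length bound on these walks. I would prove inductively on $n=|X|$ that an arithmetically-free $X$ equipped with a good-ordering admits no sorted walk of length exceeding $N := 1+n+n^2+\cdots+n^{2^n}$. The mechanism: on a walk that is too long, the good-ordering lets one identify a commuting subcollection of $X$ on which the walk contains a long monotone sub-walk; pigeonhole on its at most $n$ possible partial products then forces an arithmetic progression $x, x+y, x+2y,\ldots \subseteq X$ whose common step $y$ itself belongs to $X$, contradicting arithmetic-freeness. Each inductive step eliminates one element of $X$ as a candidate step, and tracking how the available length grows in the recursion produces the exponential depth $2^n$ and the $n$-fold branching at each level, yielding the stated sum $N$.

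The hard part will be the length bound for sorted walks, especially its non-abelian refinement: one has to pass to a commuting subcollection of $X$ without destroying the sortedness of the walk, and the extraction of the arithmetic progression has to be aligned with the good-ordering. Once the length estimate is in place, the rest is routine---a sorted commutator of length exceeding $N$ necessarily vanishes, every iterated commutator is a $\Z$-linear combination of sorted ones of the same length, and therefore $\gamma_{N+1}(L) = 0$, so $L$ is nilpotent of class at most $N$.
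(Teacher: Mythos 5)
First, a caveat: the paper you were given does not actually prove this statement. It is quoted in the introduction from the companion paper \cite{MoensPreprint} (``In \cite{MoensPreprint}, we prove:''), so there is no in-paper proof to compare your proposal against. What the present paper does prove (Theorem \ref{TheoremArithmeticallyFreeSupport}) is the version \emph{without} the good-ordering hypothesis, and the elaborate machinery it needs for that --- Lie-regular segments, regular elements of $W^\ast(A)$, hyper-derivations, and the rapidly growing recursive function $H$ --- is itself evidence that the combinatorial core you are trying to shortcut is not a routine pigeonhole.

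That core is where your proposal has a genuine gap. Your ``heart of the argument'' asserts that on a long sorted walk, pigeonhole on the at most $n$ partial products ``forces an arithmetic progression $x, x+y, x+2y,\ldots \subseteq X$ whose common step $y$ itself belongs to $X$.'' Pigeonhole on partial sums only yields a consecutive block with sum zero (two coinciding partial products); to manufacture an orbit $\mathcal{O}(x,y)$ with increment $y \in X$ one needs many consecutive repetitions of the \emph{same} letter, which a generic long walk need not contain --- extracting such structure is exactly what Higman's regularity machinery (and its generalisation in Section \ref{SectionCharacterisation}) is for, and it costs far more than the stated bound without extra hypotheses. Relatedly, your accounting of the recursion is internally inconsistent: ``each inductive step eliminates one element of $X$'' would give a recursion of depth $n=|X|$, not $2^n$. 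The exponent $2^{|X|}$ in the bound $1+n+\cdots+n^{2^n}=(n^{2^n+1}-1)/(n-1)$ is the signature of a Kreknin-type solvability step (derived length at most $2^{|X|}$), followed by an induction on derived length in which the good-ordering controls how homogeneous components of a solvable graded algebra multiply; your sketch never invokes solvability, and it never uses (or states) the actual definition of a good-ordering in the sense of Shalev, so the claims that collection terminates and that a commuting monotone sub-walk can be extracted ``without destroying sortedness'' remain unsupported assertions. As written, the proposal identifies the right ingredients (normal forms, a length bound on admissible weight sequences, arithmetic-freeness as the vanishing mechanism) but does not contain a proof of the one step that is actually hard.
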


In particular: we recover results by Kreknin-Kostrikin and Khukhro, \cite{KrekninKostrikin,KhukhroSupport}. We refer to \cite{MoensPreprint} for a more detailed exposition about arithmetically-free subsets of groups, and their applications to finite group theory and Lie theory. \newline

We will deduce theorem \ref{TheoremArithmeticallyFreeSupport} from the following characterisation of arithmetically-free subsets of groups.

\begin{faketheorem}[\ref{TheoremCharacterisation}] Consider a finite subset $X$ of an abelian group $(G,+)$, and let $<$ be a total order on $G$. Then the following two properties are equivalent:
\begin{enumerate}
\item The set $X$ is arithmetically-free.
\item Every sequence $S = (g_1,g_2,\ldots,g_{H(|X|) + 1})$ on $G$ of length $H(|X|) + 1$ has an initial segment $(g_1,g_2,\ldots,g_k) \text{ with } g_1 + g_2 + \cdots + g_k \in G \setminus X,$ or a Lie-regular segment $(g_i,g_{i+1},\ldots,g_j) \text{ with } g_i + g_{i+1} + \cdots + g_j \in G \setminus X.$
\end{enumerate}
\end{faketheorem}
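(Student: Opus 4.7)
My plan is to establish each direction of the equivalence separately, using the recursive structure of the generalised Higman map $H$.

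For the direction (2)$\,\Rightarrow\,$(1), I would argue by contrapositive. If $X$ is not arithmetically-free, then by definition there exist $x \in X$ and $y \in X$ with $x + ky \in X$ for every $k \geq 0$. I would test condition (2) on the sequence $S := (x,y,y,\ldots,y)$ of length $H(|X|) + 1$. Every initial partial sum equals $x + jy$, which lies in $X$; every non-initial contiguous subsegment has sum $ky$ for some $k \geq 1$, and iterating the AP-closure starting from $y$ places these in $X$ as well. Hence neither clause of (2) can hold, which is the desired contradiction. This direction is essentially a direct calculation once the definition of Lie-regularity is unpacked against the constant-tail test sequence.

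For the direction (1)$\,\Rightarrow\,$(2), I would proceed by strong induction on $|X|$, matching the recursion that defines $H$. Given an arithmetically-free $X$ and a sequence $S = (g_1,\ldots,g_N)$ of length $N := H(|X|) + 1$, assume that no initial segment escapes $X$; then every partial sum $\sigma_k := g_1 + \cdots + g_k$ lies in $X$. By pigeonhole, some $x \in X$ is assumed by $\sigma_k$ on a long set of indices $i_1 < i_2 < \cdots < i_M$. The increments $\sigma_{i_{r+1}} - \sigma_{i_r}$ are sums of contiguous subsegments of $S$; using the total order $<$ on $G$, I would pass to a monotone sub-subsequence and assemble these intermediate segments into a Lie-regular segment. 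Either its sum escapes $X$, and we are done, or its sum $y$ lies in $X$ together with $x, x+y, x+2y, \ldots$, producing the forbidden arithmetic progression and yielding the induction step: the intermediate partial sums are confined to a strictly smaller arithmetically-free set, on which the inductive hypothesis applies with bound $H(|X|-1)$.

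The main obstacle is the combinatorial bookkeeping that links the Higman-style pigeonhole extraction to the precise notion of Lie-regular segment: one must arrange the sub-subsequence so that the concatenated intermediate segments are genuinely Lie-regular, and must ensure that if their sum stays in $X$, then the residual structure lives in a strictly smaller arithmetically-free subset, so that the induction closes. It is this nested extraction that forces the fast recursive growth of $H$, and getting the exact form of the recursion right is the delicate step of the proof.
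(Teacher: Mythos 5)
Your converse direction $(2)\Rightarrow(1)$ follows the paper's route (test the sequence $(x,y,y,\ldots,y)$ built from a progression $\mathcal{O}(x,y)\subseteq X$), but the reason you give for why no Lie-regular segment escapes $X$ is incorrect: from $x,y\in X$ and $\mathcal{O}(x,y)\subseteq X$ one cannot conclude $2y,3y,\ldots\in X$ (take $y$ of order $5$, $x\notin\langle y\rangle$, and $X=\mathcal{O}(x,y)\cup\{y\}$; then $2y\notin X$, yet $X$ is not arithmetically-free). The observation that actually closes this direction, and that the paper makes, is that every non-initial segment of your test sequence is \emph{constant}, and a constant sequence is never Lie-regular because every permutation in a linearisation fixes it; hence only initial segments need checking, and their sums $x+jy$ lie in $X$. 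This is repairable, but the step ``places these in $X$ as well'' is false as written.

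The real gap is in $(1)\Rightarrow(2)$. You propose an induction on $|X|$: pigeonhole a value $x$ attained by many partial sums, assemble the intermediate blocks into a Lie-regular segment, and either its sum escapes $X$ or one obtains a forbidden progression and passes to a ``strictly smaller arithmetically-free set'' with bound $H(|X|-1)$. Three problems. First, in your own setup the increments between equal partial sums are $0$, and $0\notin X$ for any arithmetically-free $X$; so the ``sum lies in $X$'' branch of your dichotomy never arises, and the entire burden falls on the unproved claim that the concatenated blocks form a \emph{Lie-regular} segment --- which is precisely the hard part. Second, the set $X$ never shrinks in the actual argument, so there is no smaller arithmetically-free set to induct on: the paper's induction (theorem \ref{TheoremConditionalFullness}) is on the \emph{content} $\kappa(S)=(|\sigma(S)|,|\alpha(S)|)$ of the sequence, i.e.\ the number of distinct partial sums and of distinct letters, taken over an alphabet that changes at each step. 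The engine is the \emph{hyper-derivation} (proposition \ref{PropositionHyperDerivations}): a regular element of $W^2(A)$ whose underlying sequence is a segment of $S$ and whose span is strictly smaller, obtained from repeated occurrences of the minimal letter; arithmetic-freeness enters only to exclude the degenerate constant derivation, which would force an orbit $\mathcal{O}_{|X|}(r,t)$ with $t\in X$ inside a translate of $X$. The derived sequence lives over a new alphabet of regular words whose size must itself be bounded (the ``heads and tails'' count), and it is this double bookkeeping that produces the two-variable recursion defining $H_a(b,c)$ in definition \ref{DefinitionGeneralisedHigmanMap}; a single induction on $|X|$ has no slot for either the shrinking span or the growing alphabet. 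Third, passing from combinatorial regularity in $W^n(A)$ to Lie-regularity requires lemma \ref{LemmaRegularityImpliesLieRegularity}, which constructs the permutation set $\mathcal{S}_P$ from an explicit Lie product; ``pass to a monotone sub-subsequence'' does not supply this. What you defer as bookkeeping is in fact the proof.
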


%\begin{theorem} Consider a finite subset $X$ of an abelian group $(G,+)$, and suppose that $G$ carries a total order. The set $X$ is arithmetically-free, iff every sequence $(g_1,g_2,\ldots,g_{H(|X|) + 1})$ on $X$ of length $H(|X|) + 1$ has an initial segment $(g_1,g_2,\ldots,g_k)$ with $g_1 + g_2 + \cdots + g_k \in G \setminus X,$ or a Lie-regular segment $(g_i,g_{i+1},\ldots,g_j)$ with $g_i + g_{i+1} + \cdots + g_j \in G \setminus X.$ \end{theorem}

Lie-regularity is a minimality condition under the action of permutations derived from the Jacobi-identity; it will be introduced in \emph{Definition $3$}. We will prove the theorem by generalising the definitions and techniques of \cite{Higman}. \newline

We also answer Question \ref{QuestionNotArithmeticallyFree}:

\begin{fakeproposition}[\ref{PropositionMetabelian}] If a finite subset $Y$ of a group $G$ is \emph{not} arithmetically-free, then there exists a non-nilpotent Lie algebra $L$ that is graded by $G$ such that the support is $Y$. \end{fakeproposition}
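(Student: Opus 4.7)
The plan is to extract a combinatorial witness from the failure of arithmetical-freeness and realise it by an explicit metabelian Lie algebra. Since $Y$ is finite but not arithmetically-free, the failure must come from the second clause of the definition: there is a subset $S \subseteq Y$ of pairwise commuting elements which is not arithmetically-free inside the abelian group $H := \langle S \rangle \leq G$. Writing $H$ additively, this yields elements $x, y \in S \subseteq Y$ satisfying $\{x + ky : k \in \N\} \subseteq S$. Because $Y$ is finite, $y$ must have finite order $n \geq 1$ in $H$, so the progression collapses to the orbit $O := \{x + ky : 0 \leq k < n\} \subseteq Y$.

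Next, I would write down the standard metabelian Lie algebra $M$ on the basis $\{b, a_0, a_1, \ldots, a_{n-1}\}$ over $\Q$, with defining brackets $[b, a_k] := a_{(k+1) \bmod n}$ and all other brackets zero; the Jacobi identity is immediate because the derived subspace $\mathrm{span}(a_0, \ldots, a_{n-1})$ is abelian. Hence $M$ is metabelian, and $\ad(b)$ acts on the derived ideal as the $n$-cycle $a_k \mapsto a_{(k+1) \bmod n}$, so $M$ is not nilpotent. Decorating $M$ with a $G$-grading via $\deg(b) := y$ and $\deg(a_k) := x + ky$ is well-defined since $ny = 0$: indeed $[b, a_k]$ has weight $y + (x + ky) = x + (k+1)y$, which equals $\deg(a_{(k+1) \bmod n})$. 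Thus $M$ becomes a non-nilpotent, $G$-graded, metabelian Lie algebra of support $\{y\} \cup O \subseteq Y$.

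To promote the support to \emph{all} of $Y$, I would then form $L := M \oplus A$ where $A := \bigoplus_{g \in Y \setminus \mathrm{supp}(M)} \Q z_g$ is a central abelian ideal with trivial brackets and $\deg(z_g) := g$; this preserves the $G$-grading and the non-nilpotency while ensuring $\mathrm{supp}(L) = Y$. I do not expect any substantial obstacle here: the two mild subtleties — that $y$ must have finite order in $H$ (automatic from $|Y| < \infty$), and that several basis vectors may land in the same graded piece when degrees collide (harmless, since the brackets are fixed before the decoration) — both resolve themselves. The content lies entirely in the definition of arithmetical-freeness, which is tailored precisely so that these elementary cyclic examples form the unique obstruction to nilpotency in Theorem~\ref{TheoremArithmeticallyFreeSupport}.
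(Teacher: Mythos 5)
Your proof is correct and takes essentially the same approach as the paper: extract an orbit $\mathcal{O}(x,y)\subseteq Y$ from the failure of arithmetical-freeness inside a commuting subset, grade the standard metabelian algebra $\Lm(n)$ (your $M$ with $[b,a_k]=a_{(k+1)\bmod n}$ is exactly that algebra) over $\{y\}\cup\mathcal{O}(x,y)$, and pad with a trivially graded abelian summand to realise all of $Y$. The only cosmetic difference is that you treat the case where the neutral element lies in $Y$ uniformly (noting that colliding degrees are harmless), whereas the paper disposes of it separately at the outset.
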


Such a Lie algebra can be made to satisfy $\dim(L) = |Y|$ and $\operatorname{dl}(L) = 2$. But there is a special Lie algebra, the standard filiform Lie algebra over a field $\F$, which characterises \emph{all} (non)-arithmetically-free sets. This algebra is defined by the presentation, $\Lf := \langle v , w_1 , w_2 , \ldots | [v,w_i] = w_{i+1} , [w_j,w_k] = 0 \rangle,$ where $i,j$ and $k$ run over all natural numbers.

\begin{fakecorollary}[\ref{PropositionFiliform}] A finite subset $X$ of an arbitrary group is arithmetically-free, iff $X$ does not support a grading of the standard filiform Lie algebra. \end{fakecorollary}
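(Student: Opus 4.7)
The plan is to handle the two implications separately: the forward direction (arithmetically-free $\Rightarrow$ no grading) reduces to Theorem~\ref{TheoremArithmeticallyFreeSupport} combined with the non-nilpotence of $\Lf$; the reverse direction (non-arithmetically-free $\Rightarrow$ grading exists) proceeds by a direct construction using the combinatorial witness of the failure of arithmetic-freeness. I read ``$X$ supports a grading of $L$'' as ``$L$ admits a $G$-grading whose support is contained in $X$'', since this is the reading compatible with the closure of arithmetic-freeness under subsets.

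For the forward direction, I would suppose $X$ is arithmetically-free and, for contradiction, that $\Lf$ carries a $G$-grading with support $Y \subseteq X$. A pairwise-commuting subset of $Y$ is also a pairwise-commuting subset of $X$, so the defining condition for arithmetic-freeness transfers from $X$ to $Y$, and hence $Y$ itself is arithmetically-free. Theorem~\ref{TheoremArithmeticallyFreeSupport} then forces $\Lf$ to be nilpotent of class at most $H(|Y|)$. This contradicts the non-nilpotence of $\Lf$: from the presentation, the iterated brackets $w_i = (\ad v)^{i-1}(w_1)$ are linearly independent basis elements, so the lower central series of $\Lf$ never terminates.

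For the reverse direction, I would extract the combinatorial witness of the failure of arithmetic-freeness: inside some pairwise-commuting subset of $X$ there exist elements $x$ and $y$ such that $y \in X$ and $x + k y \in X$ for every $k \ge 0$. Because $X$ is finite, $y$ has some finite order $n$ inside the abelian subgroup generated by the commuting subset, and the orbit yields $\{y\} \cup \{x + k y : 0 \le k < n\} \subseteq X$. I would then define a $G$-grading on $\Lf$ by placing $v$ in grade $y$ and $w_i$ in grade $x + (i-1) y$ for each $i \ge 1$ (with every other homogeneous component equal to $\{0\}$). The identity $y + (x + (i-1) y) = x + i y$ ensures the relation $[v, w_i] = w_{i+1}$ is respected, while the relations $[w_j, w_k] = 0$ impose no grade constraint, so this is a valid $G$-grading of $\Lf$ with support contained in $X$.

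Neither direction presents a substantive obstacle. The underlying observation making the proof short is that a grading of the standard filiform Lie algebra is essentially determined by the homogeneous grades of $v$ and $w_1$, and this data coincides exactly with the witness $(y, x)$ for a set failing to be arithmetically-free; this is precisely what allows the filiform algebra to encode the single defining obstruction that arithmetic-freeness is built to exclude.
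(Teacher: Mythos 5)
Your proof is correct and takes essentially the same route as the paper: the reverse direction is the paper's own construction (place $v$ in degree $g$ and distribute the $w_i$ along the orbit $\mathcal{O}(x,g)$, the homogeneous components becoming infinite-dimensional because the degrees repeat with period $\operatorname{ord}(g)$), and the forward direction is the intended combination of Theorem~\ref{TheoremArithmeticallyFreeSupport} with the non-nilpotence of $\Lf$ and the observation that arithmetic-freeness passes to subsets. Nothing further is needed.
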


\paragraph{Structure of the text.} In section \ref{SectionArithmeticallyFreeSubsets}, we introduce arithmetically-free subsets of groups. In section \ref{SectionGroupGradings}, we prove the main results mentioned in the introduction modulo theorem \ref{TheoremCharacterisation}. In section \ref{SectionCharacterisation}, prove theorem \ref{TheoremCharacterisation}, which characterises arithmetically-free sets by means of Lie-regularity. We conclude with a remark about walks in Cayley-graphs.

\section{Arithmetically-free subsets of groups} \label{SectionArithmeticallyFreeSubsets}

Let us begin by introducing some notation. For an element $g$ of a group $(G,+)$, we let $\langle g \rangle$ be the cyclic subgroup of $G$ that is generated by $g$. For elements $g,x \in G$ and $n \in \N$, we define the (partial) orbit, or (partial) arithmetic progression $$\mathcal{O}_n(x,g) := \{ x , x + g , x + 2 \cdot g , \ldots , x + n \cdot g \},$$ with base $x$ and increment $g$. The (full) orbit, or (full) arithmetic progression corresponding with $x$ and $g$ is $\mathcal{O}(x,g) := \{ x , x + g , x + 2 \cdot g , x + 3 \cdot g , \ldots \} = \bigcup_n \mathcal{O}_n(x,g).$

\begin{definition} [Arithmetically-free subsets] Consider an abelian group $(G,+)$ and a subset $X$ of $G$. We say that $X$ is arithmeticically-free in $G$, iff $X$ is finite and satisfies either (both) of the following properties:
\begin{enumerate}
\item If $x,g \in X$, then $\mathcal{O}(x,g) \not \subseteq X,$
\item If $x,g \in X$, then $\mathcal{O}_{|X|}(x,g) \not \subseteq X.$
\end{enumerate}
More generally: a subset $\widetilde{X}$ of an arbitrary group $(\widetilde{G},\ast)$ is arithmetically-free, iff $\widetilde{X}$ is finite, and every subset $X$ of pairwise commuting elements is arithmetically-free.
\end{definition}

\begin{example} Let $(G,\ast)$ be an arbitrary group and let $X$ be a finite subset. If $\operatorname{ord}_{G}(x) > |X|$ for every $x \in X$, then $X$ is arithmetically-free. In particular:
\begin{enumerate}
\item Let $(\Z_n,+)$ be the cyclic group of order $n \in \N$. Then the subset $X := \{ x \in \Z_n | \operatorname{ord}_{(\Z_n,+)}(x) = n \}$ is arithmetically-free in $(\Z_n,+)$, and so are all subsets of $X$.
\item Let $(G,+)$ be the free-abelian group $(\Z^m,+)$ of rank $m \in \N$. Then the arithmetically-free subsets of $G$ are precisely the \emph{finite} subsets of $G \setminus \{ 0 \}$.
\end{enumerate}
\end{example}

\begin{example} Let $(G,\ast)$ be an arbitrary group. If the finite subset $X$ of $G$ is sum-free, then $X$ is arithmetically-free. \end{example}

We refer to \cite{MoensPreprint} for more examples and their applications.

\section{Group-gradings of Lie algebras} \label{SectionGroupGradings}

In this section we will prove theorem \ref{TheoremArithmeticallyFreeSupport} and propositions \ref{PropositionMetabelian} and \ref{PropositionFiliform} modulo theorem \ref{TheoremCharacterisation}. We begin by performing some elementary reductions.

\subsection{Reduction to finite abelian grading groups}

Suppose we are given a grading $\bigoplus_{g \in G} L_g$ of a Lie algebra $L$ by a group $G$, with support $X$. If $H$ is a subgroup of $G$, then we may define the homogeneous subalgebra $$L_H := \bigoplus_{h \in H} L_h$$ of $L$, which is clearly graded by $H$, with support $X \cap H$. We note that if $L$ is nilpotent of class at most $c \in \N$, then for every abelian subgroup $H$ of $G$, the subalgebra $L_H$ is nilpotent of class at most $c$. The following lemma from \cite{MoensPreprint} shows that the converse is also true.

\begin{lemma} \label{LemmaAbelianSupport} Consider $c \in \N$, an \emph{arbitrary} group $G$, and a $G$-graded Lie algebra $L$ with support $X$. The algebra $L$ is nilpotent of class at most $c$, iff for every abelian subgroup $H$ of $G$, the homogeneous $H$-graded subalgebra $L_H$ of $L$ with support $X \cap H$ is nilpotent of class at most $c$. \end{lemma}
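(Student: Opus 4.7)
The forward direction is immediate: any subalgebra of a Lie algebra of nilpotency class at most $c$ inherits the same bound, so every $L_H$ automatically has class $\leq c$. For the converse, assume $L_H$ has class $\leq c$ for every abelian subgroup $H \leq G$; I aim to show $L^{c+1} = \{0\}$. The crucial structural observation is that in any $G$-grading, $[L_g, L_h] = \{0\}$ whenever $g$ and $h$ do not commute in $G$: one has $[L_g, L_h] \subseteq L_{g \ast h}$ by definition of the grading, and $[L_g, L_h] = -[L_h, L_g] \subseteq L_{h \ast g}$ by antisymmetry, but $L_{g \ast h} \cap L_{h \ast g} = \{0\}$ when $g \ast h \neq h \ast g$ since $L = \bigoplus_{k \in G} L_k$ is a direct sum.

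The plan is to establish the following main claim: if $b$ is a multilinear Lie bracket in homogeneous elements $x_1 \in L_{g_1}, \ldots, x_n \in L_{g_n}$ and $b \neq 0$ in $L$, then $g_1, \ldots, g_n$ pairwise commute in $G$. I would prove this by strong induction on $n$, in its contrapositive form: assuming $g_i, g_j$ do not commute, every such $b$ vanishes. The base case $n = 2$ is the observation above. For the inductive step, write $b = [b_1, b_2]$ with leaf sets $S_1 \sqcup S_2 = \{1, \ldots, n\}$. If $\{i, j\} \subseteq S_\ell$ for some $\ell$, the induction hypothesis applied to $b_\ell$ (which has fewer than $n$ leaves and contains both $i, j$) forces $b_\ell = 0$, hence $b = 0$. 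Otherwise, with $i \in S_1$ and $j \in S_2$, I invoke the standard Jacobi-based rewriting to express $b_1$ as a linear combination of brackets $[x_i, y_\alpha]$ and $b_2$ as a linear combination of brackets $[x_j, w_\beta]$, with $y_\alpha, w_\beta$ multilinear in the remaining leaves. A careful application of $[u, [v, w]] = [[u, v], w] + [v, [u, w]]$ combined with the induction hypothesis on $[x_i, b_2]$ (which has fewer than $n$ leaves and contains both $i$ and $j$) reduces $b$ to a linear combination of terms of the form $[x_i, [x_j, w_\beta]]$. On each such term, Jacobi together with $[x_i, x_j] = 0$ gives $[x_i, [x_j, w_\beta]] = [x_j, [x_i, w_\beta]]$; but the two sides lie in the distinct direct summands $L_{g_i \ast g_j \ast h_\beta}$ and $L_{g_j \ast g_i \ast h_\beta}$, forcing both to vanish.

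With the main claim established, the conclusion is immediate: $L^{c+1}$ is spanned by multilinear $(c+1)$-fold brackets of homogeneous elements, and any non-zero such bracket $[x_1, \ldots, x_{c+1}]$ has pairwise-commuting degrees, so all the $x_i$ lie in $L_H$ for the abelian subgroup $H := \langle g_1, \ldots, g_{c+1} \rangle$; the bracket then belongs to $(L_H)^{c+1} = \{0\}$ by hypothesis --- a contradiction. The main obstacle I anticipate is the combinatorial bookkeeping in the reduction step, verifying that every intermediate bracket produced by the Jacobi rewriting has strictly fewer than $n$ leaves while still containing both $x_i$ and $x_j$, so that the induction hypothesis applies throughout.
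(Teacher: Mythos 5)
Your overall architecture is sound (note that the paper itself does not prove this lemma but imports it from \cite{MoensPreprint}, so I am judging your argument on its own merits): the forward direction is immediate, the observation $[L_g,L_h]\subseteq L_{g\ast h}\cap L_{h\ast g}=\{0\}$ for non-commuting $g,h$ is correct, your main claim (a non-zero multilinear bracket of homogeneous elements has pairwise commuting degrees) is in fact true, and the deduction of the lemma from that claim is correct. The gap sits in the inductive step of the main claim, and it is a genuine one. The rewriting you invoke --- expressing $b_1$ as a linear combination of \emph{binary} brackets $[x_i,y_\alpha]$ with each $y_\alpha$ a Lie monomial in the remaining leaves of $S_1$ --- is false in the free Lie algebra as soon as $|S_1|\geq 3$: the multilinear component on $m$ letters has dimension $(m-1)!$, whereas the span of all elements $[x_i,y]$ with $y$ multilinear in the other $m-1$ letters has dimension at most $(m-2)!$. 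Concretely, $[[x_1,x_2],x_3]$ is not proportional to $[x_1,[x_2,x_3]]=[[x_1,x_2],x_3]-[[x_1,x_3],x_2]$. (What \emph{is} true is that $b_1$ is a combination of left-normed monomials beginning with $x_i$, which is not the shape your reduction requires.) In addition, when $|S_1|=1$ the bracket $[x_i,b_2]$ on which you invoke the induction hypothesis is $b$ itself and has exactly $n$ leaves, so that appeal is circular precisely in the case where the reduction must bottom out. Since you yourself flag this reduction as the unresolved obstacle, the proof is incomplete as written.

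The claim can nevertheless be proved, and your terminal step --- $[x_i,[x_j,w]]=[x_j,[x_i,w]]$ lies in $L_{g_i\ast g_j\ast h}\cap L_{g_j\ast g_i\ast h}=\{0\}$ when $g_i\ast g_j\neq g_j\ast g_i$ --- is the right way to finish. A reduction that actually works is to prove the stronger statement that if $u$ and $v$ are non-zero multilinear brackets of homogeneous elements in disjoint variable sets and $[u,v]\neq 0$, then every degree occurring in $u$ commutes with every degree occurring in $v$; one argues by induction on the total number of leaves and, secondarily, on the number of leaves of the shorter factor, using antisymmetry to put the shorter factor on the right and peeling it with the Jacobi identity $[u,[v_1,v_2]]=[[u,v_1],v_2]+[v_1,[u,v_2]]$. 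The inner brackets $[u,v_k]$ have strictly fewer leaves, so the induction hypothesis either annihilates them (if they contain a non-commuting pair of degrees) or certifies that all their degrees pairwise commute, which makes them admissible factors for the secondary induction applied to the two outer brackets; the terminal configurations are exactly the ones your degree argument kills. With the reduction repaired along these lines, your proof of the lemma is complete.
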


So, in order to prove the nilpotency of graded Lie algebras, it suffices to consider abelian grading groups.

\begin{definition}[Contraction] Consider two groups $(G,+)$ and $(H,+)$, with respective subsets $X$ and $Y$. A contraction of $X$ onto $Y$ is a surjective map $\map{f}{X}{Y}$ such that for all $x_1,x_2,x_3 \in X$ with $x_1 + x_2 = x_3$, we have $f(x_1) + f(x_2) = f(x_3).$ \end{definition}

\begin{example} Every homomorphism is a contraction. A subset $X \subseteq (G,+)$ is sum-free, iff $X$ contracts onto $\{ \overline{1} \}\subseteq (\Z_2,+)$. \end{example}

Contractions allow us to take a grading of a Lie algebra, and make the support smaller, or ``less complicated.''

\begin{proposition} \label{PropositionContraction} Consider a grading $\bigoplus_{g \in G} L_g$ of a Lie algebra $L$ by the group $(G,+)$ with support $X$. If $\map{f}{X}{Y}$ is a contraction of $X$ onto $Y \subseteq (H,+)$, then $$ L^y := \bigoplus_{x \in f^{-1}(y)} L_x \text{ for } y \in Y, \text{ and } L^h := \{ 0 \} \text{ for all } h \in H \setminus Y$$ defines a grading of $L$ by $(H,+)$ with support $Y$. \end{proposition}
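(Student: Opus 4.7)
The plan is to verify the three required properties of a group-grading in turn: the vector-space decomposition, the bracket condition, and the identification of the support. Each step is a routine check; the only subtlety lies in handling, during the bracket condition, the case where the product $x_1+x_2$ of two supporting indices falls outside $X$.

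First I would observe that, since $L_g = \{0\}$ for $g \in G \setminus X$, the original grading reduces to $L = \bigoplus_{x \in X} L_x$. Because $f$ is a map defined on $X$, its fibres $\{f^{-1}(y)\}_{y \in Y}$ partition $X$, and hence
\[
L \;=\; \bigoplus_{y \in Y} \Bigl(\bigoplus_{x \in f^{-1}(y)} L_x\Bigr) \;=\; \bigoplus_{y \in Y} L^y \;=\; \bigoplus_{h \in H} L^h,
\]
where the last equality uses $L^h = \{0\}$ for $h \in H \setminus Y$. This establishes the $H$-graded vector-space decomposition.

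Next I would verify the bracket condition $[L^{h_1},L^{h_2}] \subseteq L^{h_1 + h_2}$ for all $h_1,h_2 \in H$. If either $h_i \notin Y$, both sides contain the zero space and there is nothing to check, so assume $h_1,h_2 \in Y$. It suffices to check the inclusion on a pair of homogeneous pieces $L_{x_1}$, $L_{x_2}$ with $x_i \in f^{-1}(h_i)$. The original grading gives $[L_{x_1},L_{x_2}] \subseteq L_{x_1+x_2}$. Here the dichotomy arises: if $x_1+x_2 \notin X$, then $L_{x_1+x_2} = \{0\}$ and the bracket vanishes, so it lies trivially in $L^{h_1+h_2}$. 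Otherwise $x_1,x_2,x_1+x_2$ all lie in $X$, and the contraction property yields $f(x_1+x_2) = f(x_1) + f(x_2) = h_1 + h_2$; thus $x_1+x_2 \in f^{-1}(h_1+h_2)$, so $L_{x_1+x_2} \subseteq L^{h_1+h_2}$, as required.

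Finally I would identify the support. For $h \in H \setminus Y$ we have $L^h = \{0\}$ by definition. Conversely, for $h \in Y$ surjectivity of $f$ gives some $x \in f^{-1}(h) \subseteq X$, whence $L_x \neq \{0\}$ and therefore $L^h \neq \{0\}$. Hence the support of the new grading equals $Y$, completing the proof. The only mild obstacle is the case analysis in the bracket step; everything else is a direct bookkeeping exercise on the fibres of $f$.
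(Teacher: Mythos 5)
Your proof is correct and follows essentially the same route as the paper: the paper's (one-line) argument checks exactly the chain $[L^h,L^{h'}] \subseteq \bigoplus [L_x,L_{x'}] \subseteq \bigoplus L_{x+x'} \subseteq L^{h+h'}$ over the fibres of $f$, leaving the decomposition, the support statement, and the case $x_1+x_2 \notin X$ implicit. You have simply made those routine points explicit; there is no substantive difference.
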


\begin{proof} For arbitrary elements $h,h'$ of $H$, we have $$ [L^h,L^{h'}] \subseteq \bigoplus_{\substack{x \in f^{-1}(h)\\x' \in f^{-1}(h')}} [L_{x},L_{x'}]  \subseteq \bigoplus_{\substack{x \in f^{-1}(h)\\x' \in f^{-1}(h')}} L_{x + x'} \subseteq L^{h + h'}.\qedhere$$
\end{proof}

We note that much information may be lost in this construction. Indeed: the trivial homomorphism $\mapl{\varepsilon}{G}{H}{x}{0}$ shows that contractions need not map arithmetically-free sets  onto arithmetically-free sets. We do have the following result.

\begin{lemma} Consider an abelian group $(G,+)$ and an aritmetically-free subset $X$. Then there exists a homomorphism $\map{\pi}{G}{H}$ from $G$ to a finite abelian group $H$ such that $\pi(X)$ is arithmetically-free. \end{lemma}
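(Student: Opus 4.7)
First I would reduce to the finitely-generated case by replacing $G$ with $G_0 := \langle X \rangle$. Since $X$ is finite, $G_0$ is a finitely-generated abelian group, and $X$ remains arithmetically-free in $G_0$; this is harmless for the paper's applications, since any $G$-grading with support $X$ is already a $G_0$-grading, and in any event one can extend a good homomorphism from $G_0$ to $G$ by embedding the finite codomain into an injective abelian group such as $(\Q/\Z)^m$. By the structure theorem, then, write $G = G_0 \cong \Z^r \oplus T$ with $T$ finite abelian, and introduce the reduction homomorphism
$$\pi_N : \Z^r \oplus T \longrightarrow H_N := (\Z/N\Z)^r \oplus T, \qquad (z,t) \longmapsto (z \bmod N, t),$$
with finite abelian codomain $H_N$. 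The goal is to show that $\pi_N(X)$ is arithmetically-free in $H_N$ for $N$ sufficiently large.

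Using the arithmetic-freeness of $X$ in $G$, for every pair $(x,g) \in X \times X$ there is a least natural number $k(x,g)$ such that $y(x,g) := x + k(x,g) \cdot g$ lies in $G \setminus X$. I would then prove the following sufficient condition: if (i) $\pi_N|_X$ is injective, and (ii) $\pi_N(y(x,g)) \notin \pi_N(X)$ for every $x, g \in X$, then $\pi_N(X)$ is arithmetically-free in $H_N$. Indeed, for any $\bar x, \bar g \in \pi_N(X)$, (i) yields unique preimages $x, g \in X$, and the orbit $\mathcal{O}(\bar x, \bar g)$ then contains the point $\bar x + k(x,g) \bar g = \pi_N(y(x,g))$, which by (ii) lies outside $\pi_N(X)$.

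Both (i) and (ii) amount to requiring that the kernel $\ker(\pi_N) = N\Z^r \oplus \{0\}$ avoid the \emph{finite} set of non-zero differences
$$D := \bigl( \{x - x' : x, x' \in X\} \cup \{y(x,g) - x' : x, g, x' \in X\} \bigr) \setminus \{0\},$$
noting that $y(x,g) \neq x'$ automatically holds, since $y(x,g) \notin X$ and $x' \in X$. For each $d = (d_{\Z^r}, d_T) \in D$ the case analysis is straightforward: if $d_T \neq 0$ then $\pi_N(d) \neq 0$ for every $N$; if $d_T = 0$ then $d_{\Z^r} \neq 0$ (otherwise $d = 0$, excluded), and any $N$ strictly exceeding the largest absolute value of its entries forces $d_{\Z^r} \notin N\Z^r$. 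Taking $N$ larger than all these finitely many thresholds produces the desired homomorphism. The main delicate point is merely the bookkeeping between the torsion and free parts of $G_0$; no deeper obstacle arises.
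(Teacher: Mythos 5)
Your argument is correct and shares the paper's skeleton --- reduce to a finitely generated group, write it as $\Z^r \oplus T$, and project the free part modulo a large $N$ --- but the verification step is different enough to be worth comparing. The paper chooses its modulus $R$ so large that the projection is \emph{sum-faithful} on $X$ (if $\pi(x)+\pi(y)=\pi(z)$ with $x,y,z \in X$ then $x=y+z$), and then \emph{lifts} any orbit $\mathcal{O}_n(\pi(x),\pi(g)) \subseteq \pi(X)$ back to an orbit $\mathcal{O}_n(x,g) \subseteq X$, contradicting arithmetic-freeness of $X$. You instead \emph{push forward} one witness per pair, the first escape point $y(x,g) = x + k(x,g)\cdot g \notin X$, and arrange that $\ker(\pi_N)$ avoids the finite difference set $D$; this works, and your condition (i) is in fact superfluous, since any preimages $x,g \in X$ of $\bar x,\bar g$ already give $\bar x + k(x,g)\cdot\bar g = \pi_N(y(x,g)) \notin \pi_N(X)$. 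One remark on your preliminary reduction: passing to $G_0 = \langle X \rangle$ is not merely convenient but necessary, because for non-finitely-generated $G$ the lemma is false as literally stated (take $G = \Q$ and $X = \{1\}$: every homomorphism to a finite group kills $X$, and $\{0\}$ is not arithmetically-free); the paper silently assumes $G$ is finitely generated when it writes $G = \Z^m \oplus F$. Your parenthetical fallback of extending through an injective group such as $(\Q/\Z)^m$ does not repair this, since the enlarged codomain is no longer finite; your primary justification --- that a $G$-grading with support $X$ is already a $\langle X \rangle$-grading, so only the contraction of $X$ is ever used --- is the right one.
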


\begin{proof} Let $G$ be given by $(\Z^m \oplus F,+)$, where $m \in \N$ and $F$ is a finite abelian group. Since $X$ is finite, there exists a natural number $R$ such that for all $x := (x_1,\ldots,x_m,x_{m+1}) \in X$: $0 < | x_1| , \ldots, |x_m| < R / 3$. Now define the natural projection $\map{\pi}{\Z^m \oplus F}{\Z_R^m \oplus F}$. If $\pi(x) + \pi(y) = \pi(z)$, for some $x,y,z \in X$, then $R | (x_1 - y_1 - z_1),\ldots,(x_m - y_m - z_m)$, so that $x = y + z$. In particular: if, for some $x,g \in X$ and $n \in \N$, we have $\mathcal{O}_n(\pi(x),\pi(g)) \in \pi(X)$, then we also have $\mathcal{O}_n(x,g) \in X$.\end{proof}

By combining proposition \ref{PropositionContraction} with lemma \ref{LemmaFiniteSupport}, we obtain:

\begin{corollary} \label{CorollaryFiniteAbelianSupport} Consider a grading $\bigoplus_g L_g$ of a Lie algebra $L$ by an abelian group $G$. Suppose that the support, $X$, is arithmetically-free. Then there exists a finite abelian group $H$ and a grading of $L$ by $H$ such that the support, $Y$, is arithmetically-free and $|Y| \leq |X|$.\end{corollary}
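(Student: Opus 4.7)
The plan is very short: the corollary is just the result of stringing together the preceding lemma and Proposition \ref{PropositionContraction}, so the proof amounts to checking that the outputs of one feed correctly into the inputs of the other.

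First I would invoke the preceding lemma on the abelian group $(G,+)$ and its arithmetically-free subset $X$ to obtain a group homomorphism $\pi: G \to H$, with $H$ a finite abelian group, such that $\pi(X)$ is arithmetically-free. I set $Y := \pi(X) \subseteq H$. Since $\pi$ is a homomorphism, its restriction $f := \pi|_X : X \to Y$ respects any additive relation $x_1 + x_2 = x_3$ holding in $X$; together with the fact that $Y$ is by definition the image $\pi(X)$, this makes $f$ a surjective contraction in the sense of Definition (Contraction).

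Next I would apply Proposition \ref{PropositionContraction} to the original grading $L = \bigoplus_{g \in G} L_g$ and the contraction $f : X \to Y$. The proposition produces a grading $L = \bigoplus_{h \in H} L^h$ of $L$ by the finite abelian group $H$, where $L^y = \bigoplus_{x \in f^{-1}(y)} L_x$ for $y \in Y$ and $L^h = \{0\}$ for $h \in H \setminus Y$. In particular, the support of the new grading is exactly $Y$, which is arithmetically-free by the choice of $\pi$. Finally, $|Y| = |\pi(X)| \leq |X|$ because $\pi$ restricts to a surjection from $X$ onto $Y$.

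There is no real obstacle here, since all the substantive work was done in proving the preceding lemma (which provides a homomorphism preserving arithmetic-freeness by choosing the modulus $R$ large enough to avoid wrap-around of the $\Z^m$-coordinates of elements of $X$) and in Proposition \ref{PropositionContraction} (which checked that the pullback grading is compatible with the bracket). The only thing to verify is the compatibility of the two statements, i.e.\ that a group homomorphism restricts to a contraction on any chosen subset, which is immediate.
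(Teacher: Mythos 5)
Your proposal is correct and is exactly the argument the paper intends: the corollary is stated immediately after the sentence ``By combining proposition \ref{PropositionContraction} with the preceding lemma, we obtain,'' and your chain (lemma gives $\pi$ with $\pi(X)$ arithmetically-free, $\pi|_X$ is a contraction onto $Y:=\pi(X)$, Proposition \ref{PropositionContraction} transports the grading, and $|Y|\leq|X|$ by surjectivity) is precisely that combination. No further comment is needed.
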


So, when proving the nilpotency of graded Lie algebras with arithmetically-free support, we may assume that the grading group is finite and abelian. We can now explain the connection between theorem \ref{TheoremArithmeticallyFreeSupport} and a result of Zel'manov.

\begin{remark} \label{RemarkEfim} Consider a graded Lie algebra $L$ with arithmetically-free support $X$. Let us show that $L$ is nilpotent if it is \emph{finitely-generated}. By lemma \ref{LemmaAbelianSupport}, we may suppose that the grading group is abelian, and by corollary \ref{CorollaryFiniteAbelianSupport}, we may even assume that it is finite abelian. Since $X$ is arithmetically-free, the component $L_0$ satisfies a non-trivial polynomial identity. A result of Bahturin-Zaicev lifts this identity to a polynomial identity on all of $L$, \cite{BahturinZaicev}. Since $L$ is finitely generated and $X$ is finite, $L$ is also generated by finitely many homogeneous elements. Since all homogeneous elements are $\operatorname{ad}$-nilpotent of index at most $|X|$, we may apply Zel'manov's theorem to conclude that $L$ is nilpotent, \cite{Efim}. \newline

The theorems of Bahturin-Zaicev and Zel'manov have been extended (by Shumyatsky) to guarantee the existence of upper bounds for the degree of the polynomial identity and the nilpotency class, \cite{Shumyatsky}. But, unfortunately, these bounds depend implicitly on the number of generators of $L$.\end{remark}

\subsection{Group-gradings with non-arithmetically-free support}

For each $n \in \N$, $n > 1$, we may define the standard metabelian Lie algebra $\Lm(n)$ over a field $\F$ by the presentation: $ \Lm(n) := \langle v,w_0,\ldots,w_{n-1} | [v,w_i] = w_{i+1} , [w_j,w_k] = 0 \rangle ,$ where $i,j,k$ run over the elements of $(\Z_n,+)$. Let us consider an obvious grading of $\Lm(n)$. For any $g \in G$ of order $n$, and $x \in \operatorname{Centr}_G(g) \setminus \langle g \rangle$, we define $\Lm(n)_g := \F \cdot v$ and $\Lm(n)_{x + i \cdot g} := \F \cdot w_i,$ with $i \in \Z_n.$ Then this extends to a $G$-grading of $\Lm(n)$ with support $\mathcal{O}(x,g)$.

\begin{proposition} \label{PropositionMetabelian} Consider a finite subset $Y$ of a group $(G,\ast)$, and suppose that $Y$ is \emph{not} arithmetically- free. Then there exists a non-nilpotent Lie algebra $L$ that is graded by $G$ with support $Y$. It is metabelian and its dimension is $|Y|$. \end{proposition}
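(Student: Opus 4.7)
The plan is to extract a concrete witness of non-arithmetic-freeness and build $L$ explicitly, distinguishing two cases. Since $Y$ is finite and not arithmetically-free, the definition produces pairwise commuting $x,g\in Y$ with $\mathcal{O}(x,g)\subseteq Y$; because the orbit sits in the finite set $Y$, the element $g$ has finite order $n:=\operatorname{ord}(g)$. I would split on whether the identity $0$ of $\langle x,g\rangle$ belongs to $Y$.

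Assume first $0\notin Y$. Then $x\notin\langle g\rangle$: otherwise $\mathcal{O}(x,g)=\langle g\rangle$ contains $0$, forcing $0\in Y$. Hence $n\geq 2$ and $S:=\{g\}\cup\mathcal{O}(x,g)$ consists of $n+1$ distinct elements of $Y$. Applying the grading $v\mapsto g$, $w_i\mapsto x+ig$ described immediately before the proposition, $\Lm(n)$ becomes $G$-graded with support $S$. I then enlarge it to
\[
L \;:=\; \Lm(n) \,\oplus\, \bigoplus_{y\in Y\setminus S} \F e_y,
\]
placing each new summand $\F e_y$ in the centre and in grade $y$. Then $L$ is $G$-graded with support $Y$ and dimension $|Y|$; it is metabelian because $[L,L]=[\Lm(n),\Lm(n)]=\operatorname{span}(w_0,\dots,w_{n-1})$ is abelian; and it is non-nilpotent because $\ad(v)$ acts on the $w_i$ by a cyclic shift of order~$n$.

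Suppose now $0\in Y$. The template above fails to give the right dimension if $x\in\langle g\rangle$, because the grades of $v$ and some $w_i$ then collide. I would instead use a diagonal construction: set $L:=\F v\oplus\bigoplus_{y\in Y\setminus\{0\}}\F e_y$, place $v$ in grade $0$ and each $e_y$ in grade $y$, and define $[v,e_y]:=e_y$, $[e_y,e_{y'}]:=0$. The identity $0\ast y=y$ makes the grading compatible; the Jacobi identity is immediate; the dimension is $|Y|$ with support $Y$; the derived ideal $[L,L]=\bigoplus_{y\neq 0}\F e_y$ is abelian (so $L$ is metabelian); and $\ad(v)^k e_y=e_y\neq 0$ for every $k$ shows that $L$ is non-nilpotent.

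The one non-routine ingredient is recognising that the $\Lm(n)$-template advertised in the preamble of the proposition cannot be grafted onto $Y$ verbatim when $x\in\langle g\rangle$: the grades of $v$ and one of the $w_i$ then coincide and push the dimension past $|Y|$. Switching to the diagonal $\ad(v)$-action when $0\in Y$ resolves this, and every remaining check---graded compatibility, metabelian structure, non-nilpotence, and dimension count---is mechanical.
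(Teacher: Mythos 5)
Your proof is correct, and in the main case it is the same construction as the paper's: extract commuting $x,g\in Y$ with $\mathcal{O}(x,g)\subseteq Y$, grade the standard metabelian algebra $\Lm(n)$ over $\{g\}\cup\mathcal{O}(x,g)$, and pad the remaining degrees of $Y$ with a central abelian summand. Where you genuinely diverge is the neutral element: the paper dismisses the case $0\in Y$ with the parenthetical ``otherwise every Lie algebra is supported by $Y$,'' which as written is not a proof (one must still exhibit a grading with support exactly $Y$ and dimension exactly $|Y|$), whereas your diagonal algebra $\F v\oplus\bigoplus_{y\neq 0}\F e_y$ with $[v,e_y]=e_y$ supplies the missing witness; you are also right that the $\Lm(n)$-template cannot be used verbatim there, since $x\in\langle g\rangle$ would force $v$ and some $w_i$ into the same degree and inflate the dimension to $|Y|+1$. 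Your bookkeeping in the main case is likewise the more accurate of the two: since $\mathcal{O}(x,g)=x+\langle g\rangle$ is a coset disjoint from $\langle g\rangle$, the support of the graded $\Lm(n)$ is $\{g\}\cup\mathcal{O}(x,g)$, of size $\operatorname{ord}(g)+1$, whereas the paper's assertions that the support is $\mathcal{O}(x,g)$ and that $g$ has order $|\mathcal{O}(x,g)|-1$ only balance if one silently reads $\mathcal{O}(x,g)$ as including $g$. The one blemish, shared with the statement itself and with the paper's proof, is the degenerate set $Y=\{0\}$: it is not arithmetically-free, yet a one-dimensional Lie algebra is nilpotent, so both arguments implicitly require $|Y|\geq 2$.
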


\begin{proof} We may suppose that $Y$ does not contain the neutral element of $G$ (since otherwise every Lie algebra is supported by $Y$). Since $Y$ is not arithmetically-free, there is a subset $Y_0$ of $Y$ of pairwise commuting elements that is not arithmetically-free. So there exist $x,g \in Y_0$ with $\mathcal{O}(x,g) \subseteq Y_0 \subseteq Y$. Clearly, $g$ has finite order $|\mathcal{O}(x,g)| -1$, and $x \not \in \langle g \rangle$. So $\Lm(|\mathcal{O}(x,g)| -1)$ is graded by $G$ with support $\mathcal{O}(x,g) \subseteq Y$. The abelian Lie algebra $\F^{|Y|-|\mathcal{O}(x,g)|}$ is trivially graded by $G$ with support $Y \setminus \mathcal{O}(x,g)$. So Lie algebra $L := \F^{|Y| - |\mathcal{O}(x,g)|} \oplus \Lm(|\mathcal{O}(x,g)|-1)$ is $G$-graded with support $Y$. It has dimension $|Y|$ and satisfies $[[L,L],[L,L]] = 0$. \end{proof}

Alternatively, we may consider the (infinite) standard filiform Lie algebra $\Lf$ over a field $\F$, which is given by the presentation $\Lf := \langle v , w_1,w_2,\ldots | [v,w_i] = w_{i+1} , [w_j,w_k] = 0 \rangle,$ where $i,j,k$ run over $\N$. This algebra is metabelian (but not nilpotent), and it also has obvious gradings. Let $g,x \in G$ be as before: $g$ has order $n \in \N$ and $x \in G \setminus \langle g \rangle.$ We define $\Lf_g := \F \cdot v$, and for each $i \in \N$, we set $\Lf_{x + i \cdot g} := \bigoplus_{j \geq 0}\F \cdot w_{i + j \cdot n}$ Then this extends to a $G$-grading of $\Lf_{\infty}$ with support $\mathcal{O}(x,g)$. As before, we obtain:

\begin{proposition} \label{PropositionFiliform} Consider a finite subset $Y$ of a group $G$, and suppose that $Y$ is \emph{not} arithmetically-free. Then the standard filiform Lie algebra $\Lf$ has a $G$-grading with support $Y_0 \subseteq Y$. \end{proposition}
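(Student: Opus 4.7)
The plan is to mimic the proof of Proposition~\ref{PropositionMetabelian} almost verbatim, substituting the infinite-dimensional standard filiform algebra $\Lf$ for the metabelian Lie algebra $\Lm(n)$ and using the explicit grading recipe recalled in the paragraph immediately preceding the statement. Because $\Lf$ is already infinite-dimensional, there is no need to pad with an abelian summand, as was done in the metabelian case; this is precisely why the conclusion here asserts only that the support $Y_0$ is contained in $Y$, rather than equal to it.

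First I would extract a witness pair. If $Y$ contains the identity of $G$, then the trivial grading $\Lf = \Lf_0$ already has support $\{0\} \subseteq Y$, so I may assume $0 \notin Y$. The hypothesis that $Y$ is not arithmetically-free then yields a pairwise-commuting subset $Y_1 \subseteq Y$ together with elements $x, g \in Y_1$ satisfying $\mathcal{O}(x,g) \subseteq Y_1 \subseteq Y$. Finiteness of $Y$ forces $g$ to have finite order, say $n$; the assumption $0 \notin Y$ forces $x \notin \langle g \rangle$, for otherwise $\mathcal{O}(x,g)$ would coincide with $\langle g \rangle$ and hence contain the identity.

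Next I would declare the homogeneous components
\[
 \Lf_g := \F \cdot v, \quad \Lf_{x + i \cdot g} := \bigoplus_{j \geq 0} \F \cdot w_{i + j n} \ \ (0 \leq i < n), \quad \Lf_h := \{ 0 \} \text{ otherwise,}
\]
and verify that this defines a $G$-grading of $\Lf$. Since $x \notin \langle g \rangle$, the $n+1$ labels $g$ and $x + i \cdot g$ for $0 \leq i < n$ are pairwise distinct in $G$, so the decomposition is unambiguous, and by construction it exhausts $\Lf$. Compatibility with the bracket reduces to the defining relation $[v, w_{i+jn}] = w_{i+jn+1}$: the right-hand side lies in $\Lf_{x + (i+1) g}$, which coincides with $\Lf_{g + (x + i g)}$ after using $n \cdot g = 0$ to reduce $i+1$ modulo $n$ when needed. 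All other brackets vanish, so the grading axiom is automatic outside this case.

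The step I expect to be the only (mild) obstacle is the cyclic bookkeeping when $i+1$ reaches $n$; the relation $n \cdot g = 0$ disposes of it immediately. Setting $Y_0 := \{ g \} \cup \mathcal{O}(x,g) \subseteq Y$ then completes the proof.
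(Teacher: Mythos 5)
Your proof is correct and follows essentially the same route as the paper, which likewise extracts commuting elements $x,g \in Y$ with $\mathcal{O}(x,g) \subseteq Y$ (forcing $g$ to have finite order $n$ and $x \notin \langle g \rangle$) and then uses exactly the grading $\Lf_g := \F \cdot v$, $\Lf_{x + i \cdot g} := \bigoplus_{j} \F \cdot w_{i + jn}$ described in the paragraph preceding the proposition. The only cosmetic slip is that with your index range $0 \leq i < n$, $j \geq 0$ the symbol $w_0$ appears even though the generators of $\Lf$ start at $w_1$; restricting to indices $i + jn \geq 1$ removes this without affecting the argument.
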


\subsection{Lie-regularity of homogeneous words}

In this subsection we introduce Lie-regular words, which turn out to be very useful in proving the nilpotency of graded algebras. We first recall some standard notation. If $L$ is a Lie algebra, and $v_1,\ldots,v_k,v_{k+1} \in L$, then we recursively define the left-associative products by $[v_1,\ldots,v_k,v_{k+1}] := [[v_1,\ldots,v_k],v_{k+1}]$. The bilinearity of the Lie-bracket together with the Jacobi-identity guarantee that every product of $k$ elements can be written as a linear combination of left-associative products of length $k$. Let us make this precise.

\paragraph{Linearisations and Lie-regularity.} Consider the free Lie algebra $F$ with free generating set $u_{1},\ldots,u_{k},$ $v_{1},\ldots,v_{l},$ $w_{1},\ldots,w_{m}.$ Let $P$ be an arbitrary (not necessarily left-associative) product of the elements $v_1,\ldots,v_l$, e.g.: $P = [[v_1,v_2],[v_3,v_4]]$ or $P = [v_1,[[v_2,v_3],v_4]]$. By applying the Jacobi-identity at most $(l-1)$-times, we obtain a set of permutations $\mathcal{S}_P \subseteq \operatorname{Sym}(l)$ containing the identity, a map $\map{\varepsilon_{P}}{\mathcal{S}_P}{\{ \pm 1\}}$, and the \emph{linearisation of $P$}:
$$
[ u_{1},\ldots,u_{k} , P , w_{1},\ldots,w_{m}] = \sum_{\pi \in \mathcal{S}_P} \varepsilon_{P}(\pi) \cdot [u_{1},\ldots,u_{k},v_{{\pi(1)}},\ldots,v_{{\pi(l)}},w_{1},\ldots,w_{m}].
$$

\begin{example} Let $l := 4$, and $P := [[v_1,v_2],[v_3,v_4]]$. The linearisation of $P$ is then given by the $8$ permutations $\id,(12),(34),(12)(34),(13)(24),(1324),(1423),(14)(23)$ and their respective signs $+,-,-,+,-,+,+,-$. \end{example}

\begin{definition}[Lie-regularity] \label{DefinitionLieRegularity} Let $(A,<)$ be a totally-ordered set, and give $A^l$ the lexicographical order. The sequence $(a_1,\ldots,a_l) \in A^l$ is \emph{Lie-regular}, iff there exists a set of permutations $\mathcal{S}_P$ defining the linearisation of a Lie product $P$, such that $(a_{1},\ldots,a_{l}) < (a_{\pi(1)},\ldots,a_{\pi(l)})$, for all $\pi \in \mathcal{S}_P \setminus \{ \id \}$. \end{definition}

We briefly mention that it also makes sense to consider linearisations in Lie algebras that satisfy various identities, say: metabelian or solvable Lie algebras. The corresponding notion of regularity will then yield stronger conclusions (i.e.: explicit upper bounds for the nilpotency class). We refer to \cite{MoensPreprint} for examples of such bounds.

\paragraph{Graded linearisation.} The above linearisation formula holds in every Lie algebra. But, if the Lie algebra is graded, then the linearisations become more interesting. So let us consider a grading $\bigoplus_g L_g$ of a Lie algebra $L$ by a finite abelian group $(G,+)$ and let $X \subseteq G$ be its support. Let $<$ be a total order on $X$, and extend it lexicographically to all finite $X$-tuples. \newline

We define the homogenous, left-associative words (of the grading) to be the homogeneous elements $v_1,v_2,\ldots$ themselves, and their iterated products $[v_1,v_2], [v_1,v_2,v_3], \ldots$. To every homogeneous element $v$ that is not zero, we may associate a unique element $\overline{v} \in X$ such that $v \in L_{\overline{v}}$. If the elements $v_1,\ldots,v_k$ are homogeneous and not zero, then we assign to the word $[v_1,v_2,\ldots,v_k]$ the weight sequence $(\overline{v}_1,\ldots,\overline{v}_k) \in X^k \subseteq G^k$. If $[v_1,v_2,\ldots,v_k]$ and $[w_1,w_2,\ldots,w_l]$ are two such words, we may compare them: $$[v_1,v_2,\ldots,v_k] < [w_1,v_2,\ldots,w_l] \leftrightarrow (\overline{v}_1,\ldots,\overline{v}_k) < (\overline{w}_1,\ldots,\overline{w}_l).$$

Let us evaluate the identity $(\ast)$ in homogeneous elements of the grading $\bigoplus_g L_g$. Suppose that $v_1,\ldots,v_k \neq 0$, but $\overline{v_1} + \cdots + \overline{v_l} \in G \setminus X$. Then $P = 0$, so that the identity collapses to
$$
0 = \sum_{\pi \in \mathcal{S}_P} \varepsilon_{P}(\pi) \cdot [u_{1},\ldots,u_{k},v_{{\pi(1)}},\ldots,v_{{\pi(l)}},w_{1},\ldots,w_{m}].
$$

We have (trivially) shown:

\begin{proposition} \label{PropositionIncrease} Consider a non-zero, left-associative, homogeneous word $v$ with weight sequence $(a_1,\ldots,a_k,b_1,\ldots,b_l,c_1,\ldots c_m) \in X^{k + l + m}$. If the segment $(b_1,\ldots,b_l)$ is Lie-regular and $ b_1 + \cdots + b_l \in G \setminus X$, then $v$ is spanned by strictly larger, left-associative, homogeneous words of the same length. \end{proposition}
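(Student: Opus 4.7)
The plan is to unpack the definitions and apply the graded linearisation identity directly to a Lie product witnessing the Lie-regularity. First, I would use the Lie-regularity of the weight segment $(b_1, \ldots, b_l)$ to fix a Lie product $P$ in $l$ free generators whose associated permutation set $\mathcal{S}_P$ satisfies $(b_1, \ldots, b_l) < (b_{\pi(1)}, \ldots, b_{\pi(l)})$ for every $\pi \in \mathcal{S}_P \setminus \{\id\}$.

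Next, write $v = [v_1, \ldots, v_{k+l+m}]$ with $\overline{v_i} = a_i$ for $i \leq k$, $\overline{v_{k+i}} = b_i$ for $1 \leq i \leq l$, and $\overline{v_{k+l+j}} = c_j$ for $j \leq m$. Substituting $v_{k+1}, \ldots, v_{k+l}$ into the free generators of $P$ produces an element of weight $b_1 + \cdots + b_l \in G \setminus X$, so the substituted $P$ lies in $L_{b_1 + \cdots + b_l} = \{0\}$ and therefore vanishes. Applying the linearisation identity with the outer slots filled by $u_i := v_i$ and $w_j := v_{k+l+j}$ yields
$$0 = \sum_{\pi \in \mathcal{S}_P} \varepsilon_P(\pi) \cdot [v_1, \ldots, v_k, v_{k+\pi(1)}, \ldots, v_{k+\pi(l)}, v_{k+l+1}, \ldots, v_{k+l+m}].$$

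Since $\id \in \mathcal{S}_P$, its contribution is $\varepsilon_P(\id) \cdot v$ with $\varepsilon_P(\id) \in \{\pm 1\}$, and solving for $v$ expresses it as an $\F$-linear combination of the remaining left-associative homogeneous words, each of length $k+l+m$ and with weight sequence $(a_1, \ldots, a_k, b_{\pi(1)}, \ldots, b_{\pi(l)}, c_1, \ldots, c_m)$ for some $\pi \neq \id$. Under the lexicographic extension of $<$ from $X$ to $X^{k+l+m}$, each such sequence strictly exceeds $(a_1, \ldots, a_k, b_1, \ldots, b_l, c_1, \ldots, c_m)$: the first $k$ coordinates agree, while the next $l$ coordinates form a strictly larger tuple by the choice of $P$, and what happens in the final $m$ coordinates is irrelevant.

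I do not anticipate a genuine obstacle here; as the author acknowledges by labelling the proposition ``trivially shown'', this is essentially bookkeeping once the linearisation and Lie-regularity conventions are in place. The only minor points of care are that $\varepsilon_P(\id)$ is a unit in $\F$ (so that $v$ can indeed be isolated) and that the lexicographic comparison on $X^{k+l+m}$ behaves correctly when only the middle block of the weight sequence is altered — both are immediate from the conventions established earlier in the section.
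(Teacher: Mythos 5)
Your proposal is correct and follows exactly the route the paper takes: the paper derives the proposition as an immediate consequence of evaluating the linearisation identity in homogeneous elements, noting that the substituted product $P$ lands in $L_{b_1+\cdots+b_l}=\{0\}$ and that Lie-regularity forces every non-identity term to have a lexicographically larger weight sequence. Your write-up merely makes explicit the bookkeeping (isolating the identity term via $\varepsilon_P(\id)=\pm 1$ and the lexicographic comparison of concatenated tuples) that the paper leaves as "trivially shown."
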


Similarly, we obtain:

\begin{proposition} \label{PropositionInitial} Consider non-zero, homogeneous elements $v_1,\ldots,v_k,w_1,\ldots,w_l$. If $\overline{v}_1 + \cdots + \overline{v}_k \in G\setminus X$, then $[v_1,\ldots,v_k,w_1,\ldots,w_l] = 0.$ \end{proposition}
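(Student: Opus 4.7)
The plan is to observe that the claim follows immediately from the grading axiom, once we track where the initial segment lives. By $[L_g,L_h] \subseteq L_{g+h}$ and a trivial induction on $k$, the left-associative product $[v_1,\ldots,v_k]$ belongs to the homogeneous component $L_{\overline{v}_1 + \cdots + \overline{v}_k}$. Since the support is defined by $X = \{ g \in G \mid L_g \neq \{0\} \}$ and $\overline{v}_1 + \cdots + \overline{v}_k \in G \setminus X$ by hypothesis, that component is the zero subspace, so $[v_1,\ldots,v_k] = 0$. The full word is obtained from this by $l$ further left-associative multiplications on the right, and bilinearity of the Lie bracket forces $[v_1,\ldots,v_k,w_1,\ldots,w_l] = 0$.

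There is no real obstacle: in contrast to Proposition \ref{PropositionIncrease}, no Jacobi identity, linearisation, or permutation-minimality argument is needed here. The stronger conclusion (the word is \emph{zero}, not just spanned by larger words) is available precisely because the segment that exits the support is an \emph{initial} one, so the partial product $[v_1,\ldots,v_k]$ is itself a homogeneous element of weight outside $X$ and therefore already vanishes before the trailing factors $w_1,\ldots,w_l$ come into play.
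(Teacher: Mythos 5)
Your proof is correct and is exactly the argument the paper intends: the paper states Proposition \ref{PropositionInitial} without proof (``Similarly, we obtain\ldots'') as a trivial consequence of the grading axiom, and your observation that $[v_1,\ldots,v_k]$ lies in $L_{\overline{v}_1+\cdots+\overline{v}_k}=\{0\}$ before the $w_i$ are bracketed on is precisely that trivial argument. Your remark contrasting this with Proposition \ref{PropositionIncrease} (no linearisation or Jacobi identity needed here) is also accurate.
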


\subsection{Lie algebras with arithmetically-free support}

We are now in a position to prove theorem \ref{TheoremArithmeticallyFreeSupport} modulo theorem \ref{TheoremCharacterisation}. 

\begin{theorem} \label{TheoremArithmeticallyFreeSupport} Consider a Lie algebra $L$ that is graded by a group $G$. If the support, $X$, of the grading is arithmetically-free, then $L$ is nilpotent of class at most $H(|X|)$. \end{theorem}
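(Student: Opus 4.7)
The plan is to reduce to a finite abelian grading group and then run a lexicographic descent on weight sequences, with Theorem \ref{TheoremCharacterisation} providing exactly the dichotomy that Propositions \ref{PropositionInitial} and \ref{PropositionIncrease} are designed to exploit. By Lemma \ref{LemmaAbelianSupport} it suffices to bound the nilpotency class of each abelian homogeneous subalgebra, so we may assume the grading group $G$ is abelian; by Corollary \ref{CorollaryFiniteAbelianSupport} we may further replace $G$ by a finite abelian group $A$ so that $L$ is $A$-graded with arithmetically-free support $Y \subseteq A$ satisfying $|Y| \leq |X|$. Assuming $H$ is monotone (and in any event the bound $H(|Y|)$ would suffice), we are reduced to the case of a finite abelian grading group.

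Fix a total order $<$ on $Y$ and extend it lexicographically to finite $Y$-tuples. By bilinearity of the bracket, the Jacobi identity, and the grading, every element of the iterated commutator subspace $L^{H(|X|)+1}$ is a linear combination of left-associative homogeneous words $v = [v_1, v_2, \ldots, v_{H(|X|)+1}]$ of length $H(|X|)+1$, so it suffices to show that every such word vanishes. Fixing one and reading off its weight sequence $(g_1, \ldots, g_{H(|X|)+1}) \in Y^{H(|X|)+1}$, Theorem \ref{TheoremCharacterisation} gives either an initial segment summing outside $Y$ or a Lie-regular segment summing outside $Y$. In the first case Proposition \ref{PropositionInitial} yields $v = 0$ outright; in the second case Proposition \ref{PropositionIncrease} rewrites $v$ as a linear combination of strictly lex-larger left-associative homogeneous words of the same length $H(|X|)+1$.

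Iterating this dichotomy is a well-founded descent on the finite, lexicographically-ordered set $Y^{H(|X|)+1}$: each application either kills the current word or replaces it with strictly larger words, and since the order is well-founded on a finite set the process terminates only when every remaining word has been eliminated via the initial-segment case. Hence $L^{H(|X|)+1} = \{0\}$, i.e.\ $L$ is nilpotent of class at most $H(|X|)$.

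The main obstacle is packaged in Theorem \ref{TheoremCharacterisation}: that is where the quantitative dependence on $|X|$ enters through the map $H$, and once the characterisation is available the argument above is mechanical bookkeeping on weight sequences. The only subtlety worth flagging is that the words produced in the Lie-regular case must themselves be reduced by re-applying the dichotomy, but the finiteness of $Y^{H(|X|)+1}$ together with the strict monotonicity of the replacement step guarantees termination in finitely many steps.
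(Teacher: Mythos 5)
Your proof is correct and follows essentially the same route as the paper: reduce to an abelian grading group via Lemma \ref{LemmaAbelianSupport}, then apply the dichotomy of Theorem \ref{TheoremCharacterisation} together with Propositions \ref{PropositionInitial} and \ref{PropositionIncrease} to weight sequences of left-associative homogeneous words of length $H(|X|)+1$. The only differences are cosmetic: the paper skips the (harmless but unnecessary) further reduction to a finite abelian group, and it phrases your termination-of-rewriting argument as choosing a maximal non-zero word in the finite lexicographically ordered set of weight sequences and deriving a contradiction.
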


\begin{proof} Let the grading be given by $L = \bigoplus_{g \in G} L_g$. In view of lemma \ref{LemmaAbelianSupport}, we may assume that $G$ is abelian. Let $\mathcal{W}$ be the set of non-zero, left-associative, homogeneous words (w.r.t this grading) of length $H(|X|) + 1$. Let $(X,<)$ be an arbitrary total order on $X$, and order all elements of $\mathcal{W}$ by their weight sequences. Suppose $\mathcal{W} \neq \emptyset$. Since there are only finitely many sequences on $X$ of length $H(|X|) +1$, we may assume that $W \in \mathcal{W}$ is maximal. Let $S$ be the weight sequence of $W$. If $S$ has an initial segment with weight in $G \setminus X$, then $W$ vanishes according to proposition \ref{PropositionInitial}. This contradicts the choice of $W$. If $S$ has a Lie-regular segment of weight in $G \setminus X$, then according to proposition \ref{PropositionIncrease}, $W$ is a linear combination of $W' \in \mathcal{W}$, with $W' > W$. Since $W$ was chosen maximal-non-zero, all of the $W'$ vanish. This again contradicts the choice of $W$. But theorem \ref{TheoremCharacterisation} guarantees that one of these cases must occur. We conclude $\mathcal{W} = \emptyset$, so that $L$ is nilpotent of class at most $H(|X|)$. \end{proof}

We recall some notation: the lower central series of a Lie algebra $L$ is defined by $\gamma_1(L) := [L,L]$, and $\gamma_{t+1} := [\gamma_t(L),L]$ for $t \in \N$, $t > 1$.

\begin{corollary} Consider a Lie algebra $L$ that is graded by a group $G$. Let $X$ be an arithmetically-free subset of $G$ and let $I$ be a homogeneous ideal of $L$ containing the homogeneous subspace $\bigoplus_{g \in G \setminus X} L_g$ of $L$. Then $\gamma_{H(|X|)+1}(L) \subseteq I$. \end{corollary}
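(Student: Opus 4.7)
The plan is to pass to the quotient $L/I$ and reduce the corollary to the already proved theorem \ref{TheoremArithmeticallyFreeSupport}. Since $I$ is a homogeneous ideal of $L$, the quotient $L/I$ inherits a $G$-grading from $L$ via the decomposition
\[
L/I \;=\; \bigoplus_{g \in G} (L_g + I)/I.
\]
I would first verify that this is indeed a well-defined grading: compatibility with the bracket is inherited from $L$, and the sum is direct because $I$ is homogeneous (so $I = \bigoplus_g (I \cap L_g)$, and the natural map $L_g/(I \cap L_g) \to (L_g + I)/I$ is an isomorphism). This is a short, routine check.

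Next I would identify the support $Y$ of the grading on $L/I$. By hypothesis $\bigoplus_{g \in G \setminus X} L_g \subseteq I$, so $(L_g + I)/I = 0$ for every $g \in G \setminus X$. Hence $Y \subseteq X$. Since arithmetic-freeness is clearly inherited by subsets (every subset $X_0 \subseteq X$ of pairwise commuting elements is still a subset of pairwise commuting elements of $X$, and the forbidden orbit condition $\mathcal{O}(x,g) \not\subseteq X$ automatically gives $\mathcal{O}(x,g) \not\subseteq X_0$), the set $Y$ is itself an arithmetically-free subset of $G$ with $|Y| \leq |X|$.

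Now I would apply theorem \ref{TheoremArithmeticallyFreeSupport} to the $G$-graded Lie algebra $L/I$: its support $Y$ is arithmetically-free, so $L/I$ is nilpotent of class at most $H(|Y|) \leq H(|X|)$ (using monotonicity of the generalised Higman map $H$; alternatively one observes directly that the proof of \ref{TheoremArithmeticallyFreeSupport} applied to $L/I$ works verbatim with the ambient bound $H(|X|)$). This means $\gamma_{H(|X|)+1}(L/I) = 0$, and lifting along the canonical projection $L \twoheadrightarrow L/I$ yields $\gamma_{H(|X|)+1}(L) \subseteq I$, as required.

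There is no real obstacle here; the corollary is essentially an immediate packaging of the theorem through a quotient. The only minor subtleties are the bookkeeping for the induced grading on $L/I$ and the (completely straightforward) monotonicity/closed-under-subsets properties of arithmetic-freeness, both of which I expect to dispatch in a single sentence each.
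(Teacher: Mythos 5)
Your proof is correct and is exactly the intended derivation: the paper states this corollary without any proof, as an immediate consequence of Theorem \ref{TheoremArithmeticallyFreeSupport} applied to the $G$-graded quotient $L/I$, whose support is contained in $X$ and hence arithmetically-free. The only point worth keeping explicit is the monotonicity of $n \mapsto H(n) = H_n(n,n)$, which requires monotonicity of $H_a(b,c)$ in the first slot $a$ as well (the paper only records it for the last two); as you note, this follows routinely from the recursion defining $H_a(b,c)$, or can be bypassed by rerunning the proof of the theorem on $L/I$ with the ambient bound $H(|X|)$.
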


\section{A characterisation of arithmetically-free sets} \label{SectionCharacterisation}

\begin{definition}[Alphabet, weight] An \emph{alphabet} with coefficients in an abelian group $(G,+)$ is a totally ordered set $(A,<)$ together with a map $\map{\omega}{A}{G}$, called the \emph{weight}, and an arithmetically-free subset $X$ of $G$. Notation: $(A,<,\map{\omega}{A}{G},X)$, or simply $(A,<,\omega,X)$. We say that the alphabet is \emph{finite}, if $\omega^{-1}(X)$ is a finite subset of $A$. \end{definition}

In \ref{DefinitionGeneralisedHigmanMap}, we recursively define a map $\mapl{H}{\N \times \N \times \N}{\N}{(a,b,c)}{H_a(b,c)}$. The main aim of this section is to prove the following theorem.

\begin{theorem} \label{TheoremAlphabet} Consider a finite alphabet $(A,<,\map{\omega}{A}{G},X)$, and let $S := (a_1,\ldots,a_{k})$ be a sequence on $A$ of length $k \geq {H}_{|X|}(|X|,|\omega^{-1}(X)|)$. Then $\{ 0, \omega(a_1), \omega(a_1) + \omega(a_2) , \ldots , \omega(a_1) + \cdots \omega(a_k) \} \subseteq G$ cannot be translated into $X$, or $S$ contains a Lie-regular segment $(a_i,\ldots,a_j)$ with $\omega(a_i) + \cdots + \omega(a_j) \in G \setminus X$. \end{theorem}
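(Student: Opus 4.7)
The plan is to prove Theorem \ref{TheoremAlphabet} by induction, generalizing Higman's combinatorial argument from the cyclic case. The recursive definition of $H_a(b,c)$ (Definition \ref{DefinitionGeneralisedHigmanMap}) is presumably designed to match exactly the recursion that arises from the reduction steps below. I would induct lexicographically on the triple $(|X|, |\omega^{-1}(X)|, k)$; the base cases $|X| = 0$ or $|\omega^{-1}(X)| = 0$ are immediate, since any single letter of $S$ then has weight outside $X$, and singleton segments are automatically Lie-regular.

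For the inductive step, suppose for contradiction that neither alternative of the theorem holds: there exists $g \in G$ with all partial sums $g, g + \omega(a_1), g + \omega(a_1) + \omega(a_2), \ldots$ lying in $X$, and no consecutive Lie-regular segment of $S$ has weight outside $X$. Since $|S| \geq H_{|X|}(|X|, |\omega^{-1}(X)|)$ is very large while $|\omega^{-1}(X)|$ is finite, a pigeonhole argument gives some letter $a_0$ appearing many times. I would fix $a_0$ to be the least such heavily-occurring letter with respect to $<$, and let $i_1 < i_2 < \cdots < i_m$ be its positions in $S$. The induced sub-sequence of partial sums $(T_{i_1}, T_{i_2}, \ldots, T_{i_m})$ is then governed by the increments $T_{i_{j+1}} - T_{i_j}$, each of which is the sum of $\omega(a_0)$ and of an intermediate word on letters that may be assumed to exceed $a_0$.

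The key reduction would then proceed as follows. If any such intermediate segment itself had weight outside $X$, I would bracket it together with $a_0$-endpoints to produce a Lie-regular segment of $S$ (bracketings anchored by a minimal letter meet the requisite minimality condition under $\mathcal{S}_P$ in Definition \ref{DefinitionLieRegularity}). Hence we may assume every intermediate weight lies in the translated copy of $X$, which forces the increments into a strictly smaller structured set — either because $\omega(a_0)$ produces a long arithmetic progression in $X$, which arithmetic-freeness immediately rules out unless $\omega(a_0) \notin X$ (giving the singleton Lie-regular segment $(a_{i_1})$), or because we may contract to a sub-alphabet over an arithmetically-free subset $X'$ with $|X'| < |X|$ or $|\omega^{-1}(X')| < |\omega^{-1}(X)|$. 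Applying the inductive hypothesis to the derived sub-sequence would yield a Lie-regular segment at the coarse scale, which one would then lift back to $S$ by nesting the outer bracketing $P_{\text{outer}}$ with inner brackets $P_{\text{inner}}$ collecting the intermediate letters.

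The main obstacle, I expect, is exactly this lifting step: verifying that a Lie-regular pattern detected in the reduced sub-alphabet remains Lie-regular once the smaller letters between the chosen positions are reinserted. Concretely, one must show that for a suitably nested product $P = P_{\text{outer}}\bigl[P_{\text{inner}},\ldots,P_{\text{inner}}\bigr]$, the permutation set $\mathcal{S}_P$ respects the lexicographic order on the full weight sequence, so that minimality under $\mathcal{S}_{P_{\text{outer}}}$ at the coarse scale, together with the convention making $a_0$ smallest, forces minimality under $\mathcal{S}_P$ at the fine scale. This bookkeeping — together with the verification that the sub-problem's parameters decrease in precisely the way encoded by the recursion defining $H_{|X|}(|X|, |\omega^{-1}(X)|)$ — is where the real work lies.
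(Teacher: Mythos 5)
Your plan has the right Higman-style flavor (minimal letter, bracketing intermediate blocks against occurrences of that letter, an arithmetic-progression contradiction), but the induction is set up on the wrong invariants, and the step that is supposed to make the parameters decrease does not work as stated. You propose to induct on $(|X|,|\omega^{-1}(X)|,k)$ and to reduce to ``a sub-alphabet over an arithmetically-free subset $X'$ with $|X'|<|X|$.'' There is no mechanism for the \emph{target} set $X$ to shrink: in Higman's own setting ($G=\Z_p$, $X=\Z_p\setminus\{0\}$) it never does, and the recursion defining $H_a(b,c)$ keeps the subscript $a=|X|$ fixed throughout. The quantity that actually decreases in the paper's proof is the \emph{span} $\sigma(S)=\{0,\omega(a_1),\omega(a_1)+\omega(a_2),\ldots\}$, i.e.\ the set of partial sums, which has at most $|X|$ elements whenever it is translatable into $X$. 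The paper's induction (Theorem \ref{TheoremConditionalFullness}) is lexicographic on the content $\kappa(S)=(|\sigma(S)|,|\alpha(S)|)$, and the strict decrease $|\sigma(U)|<|\sigma(S)|$ for the derived sequence $U$ is exactly where arithmetic-freeness enters: if the span did not shrink, it would be closed under adding the minimal letter's weight $y\in X$ and hence contain a full orbit $\mathcal{O}(r,y)$, contradicting translatability into $X$ (Proposition \ref{PropositionHyperDerivations}). Your argument uses the progression only to dispose of one degenerate case and never identifies the span as the decreasing parameter, so the induction does not close; relatedly, your base cases ($|X|=0$ or $|\omega^{-1}(X)|=0$) are not the ones needed ($|\sigma(S)|=1$ and $|\alpha(S)|=1$, the latter again via the orbit argument).

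The second issue is the lifting step, which you correctly flag as the main obstacle but leave entirely open. The paper avoids re-verifying Lie-regularity of nested products at the fine scale: it introduces a purely combinatorial notion of regularity on the iterated word spaces $W^n(A)$ (shifts of the alphabet), proves the existence of regular ``hyper-derivations'' of bounded length, and invokes once and for all that regularity implies Lie-regularity of the underlying sequence (Lemma \ref{LemmaRegularityImpliesLieRegularity}, i.e.\ Higman's Lemma 6). The derived sequence $U$ then lives over a new finite alphabet $B$ (a restriction of a shift of $\mathcal{A}$, with $|B|$ controlled by counting heads and tails), the induction hypothesis applies to $U$, and Lemma \ref{LemmaFullShift} transports fullness back to $S$. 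Without either this machinery or an explicit verification that your nested $\mathcal{S}_P$ respects lexicographic minimality, the proposal remains a plausible outline rather than a proof.
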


We recall that a subset $M$ of $G$ can be translated into a subset $N$ of $G$, iff there exists an $h \in G$ such that $h + M := \{ h + m | m \in M\} \subseteq N$. Set $H(n) := H_n(n,n)$. If we specialise $A := G$ and $\mapl{\omega}{A}{G}{a}{a}$, then we obtain:

\begin{theorem}[Characterisation of arithmetically-free subsets of groups] \label{TheoremCharacterisation} Consider an abelian group $(G,+)$ with total order $<$ and a finite subset $X$. The following two properties are then equivalent:
\begin{enumerate}
\item $X$ is arithmetically-free.
\item Every sequence $S := (g_1,g_2,g_3, \ldots , g_{k})$ on $G$ of length $k \geq H(|X|)+1$, has an initial segment $(g_1,g_2,\ldots,g_{l})$ satisfying $g_1 + g_2 + \cdots + g_l \in G \setminus X$, or has a Lie-regular segment $(g_i,g_{i+1},\ldots,g_j)$ satisfying $g_i + g_{i+1} + \cdots + g_j \in G \setminus X$. 
\end{enumerate} \end{theorem}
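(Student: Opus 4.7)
The equivalence splits into two implications of very different character. For $(2) \Rightarrow (1)$ I would argue directly by contrapositive, exhibiting an arithmetic progression as an explicit obstruction. For the harder direction $(1) \Rightarrow (2)$ I would specialise the more general Theorem~\ref{TheoremAlphabet} to the alphabet $(A,<,\omega,X) = (G,<,\mathrm{id}_G,X)$, so that $\omega^{-1}(X) = X$ and $H_{|X|}(|X|,|X|) = H(|X|)$ by the definition of $H$; the conclusion of Theorem~\ref{TheoremAlphabet} then converts directly into property $(2)$, with the identity translation isolating the initial-segment alternative.

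\textbf{Proof of $(2) \Rightarrow (1)$.} Suppose $X$ is not arithmetically-free. Since $G$ is abelian, there exist $x, g \in X$ with $\mathcal{O}(x,g) \subseteq X$. For arbitrary $k \in \N$ consider the sequence $S_k := (x, g, g, \ldots, g)$ of length $k+1$. Every non-empty initial segment has sum $x + jg \in X$, so none lies in $G \setminus X$. Any segment of $S_k$ starting at position $i \geq 2$ has the form $(g, g, \ldots, g)$: a single letter equals $g \in X$, and a constant tuple of length $\geq 2$ is fixed by the transposition $(1\;2)$, so the strict-minimality required by Definition~\ref{DefinitionLieRegularity} fails for every non-trivial permutation set $\mathcal{S}_P$ and the segment is not Lie-regular. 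Hence, regardless of how large $k$ is chosen, $S_k$ admits neither an initial segment nor a Lie-regular segment of sum in $G \setminus X$, which contradicts $(2)$.

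\textbf{Strategy for Theorem~\ref{TheoremAlphabet}.} I would prove Theorem~\ref{TheoremAlphabet} by double induction, primary on $|X|$ and secondary on $|\omega^{-1}(X)|$, with the recursive definition of $H_a(b,c)$ calibrated to make the recursion close. The combinatorial engine is Higman's pigeonhole device from \cite{Higman}: given a sequence $S$ of the stated length whose translated partial sums all lie in $X$, the gap between $|S|$ and $|X|$ forces, after repeatedly applying pigeonhole, a long sub-progression of indices $i_1 < i_2 < \cdots$ at which the translated partial sum takes one common value $y \in X$. Each block increment $\omega(a_{i_j+1}) + \cdots + \omega(a_{i_{j+1}})$ then vanishes in $G$, while the interior partial sums of a block remain in a translate of $X \setminus \{y\}$; treating the blocks as letters of a derived alphabet over the strictly smaller arithmetically-free set $X \setminus \{y\}$ triggers the primary induction. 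A Lie-regular segment obtained at the derived level is then pulled back to one at the original level.

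\textbf{Main obstacle.} The hardest step, and the reason $H$ grows as rapidly as it does, is the lifting of Lie-regularity across the reduction. Given a Lie-regular segment of the derived sequence, certified by a permutation set $\mathcal{S}_Q$ coming from some Lie word $Q$, one must nest $Q$ inside a larger Lie word $P$ at the original level so that every non-identity permutation in $\mathcal{S}_P$ still strictly increases the original weight tuple. Ensuring that the block structure produced by pigeonhole is compatible with the permutation structure of the nested Lie word requires careful combinatorial bookkeeping, and this is where I expect most of the technical work to concentrate; the function $H_a(b,c)$ is then defined recursively in precisely such a way as to absorb the pigeonhole-with-nesting overhead incurred at each inductive step.
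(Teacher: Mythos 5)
Your overall route is the same as the paper's: direction $(1)\Rightarrow(2)$ is obtained by specialising Theorem~\ref{TheoremAlphabet} to the alphabet $(G,<,\mathrm{id}_G,X)$, and direction $(2)\Rightarrow(1)$ is the contrapositive via the explicit sequences $S_k=(x,g,\ldots,g)$ built from an arithmetic progression $\mathcal{O}(x,g)\subseteq X$; your treatment of $(2)\Rightarrow(1)$ is correct and, if anything, slightly cleaner than the paper's (you avoid the case split on $x<g$ by noting that initial segments have sums in $X$ whether or not they are Lie-regular).

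One step as you state it does not quite work, though it is easily repaired. You propose to apply Theorem~\ref{TheoremAlphabet} to $S$ itself and let ``the identity translation isolate the initial-segment alternative.'' But $\sigma(S)=\{0,g_1,g_1+g_2,\ldots,g_1+\cdots+g_k\}$ contains $0$, which is \emph{not} the sum of any initial segment; so ``$\sigma(S)$ cannot be translated into $X$'' only gives $\sigma(S)\not\subseteq X$, and the offending element could be $0$ alone (e.g.\ when $0\notin X$ but all partial sums lie in $X$). The paper instead applies Theorem~\ref{TheoremAlphabet} to the tail $T:=(g_2,\ldots,g_k)$, whose span satisfies $g_1+\sigma(T)=\{g_1,g_1+g_2,\ldots,g_1+\cdots+g_k\}$, exactly the set of initial-segment sums of $S$; the non-translatability of $\sigma(T)$ then genuinely forces some initial segment of $S$ to have sum in $G\setminus X$, and a Lie-regular segment of $T$ is a fortiori one of $S$. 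Finally, your sketched proof of Theorem~\ref{TheoremAlphabet} (primary induction on $|X|$, passing to $X\setminus\{y\}$) is closer to Higman's original argument than to the paper's, which keeps $X$ fixed and inducts lexicographically on the content $\kappa(S)=(|\sigma(S)|,|\alpha(S)|)$ via hyper-derivations; but since both you and the paper invoke Theorem~\ref{TheoremAlphabet} as a black box at this point, that difference does not affect the proof of the present statement.
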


\begin{proof} Suppose first that $X$ is arithmetically-free, and let $S := (g_1,\ldots,g_k) \in G^k$, with $k \geq H(|X|) + 1$. Define the $T := (g_2,\ldots,g_k) \in G^{H(|X|)}$. If $\{ 0,g_2,g_2+g_3,\ldots,g_2 + \cdots + g_k \}$ cannot be translated into $X$, then neither can $g_1 + \{ 0,g_2, g_2 + g_3 , \ldots, g_1 + \cdots + g_k \} = \{ g_1,g_1 + g_2 , \ldots , g_1 + \cdots + g_k \}$. Else, theorem \ref{TheoremAlphabet} gives us a Lie-regular segment $U := (g_i, \ldots,g_j)$ of $T$ (and therefore of $S$) satisfying $g_i + \cdots + g_j \in G \setminus X$. \newline

Conversely, suppose that $X$ is not arithmetically-free. Then there exist $x,g \in X$ with $\mathcal{O}(x,g) \subseteq X$. For each $k \in \N$, we define $S_k := (x,g,\ldots,g) \in X^{k+1}$. It is now easy to verify that the $S_1,S_2,\ldots$ form a family of arbitrarily long sequences on $G$ for which every initial and every Lie-regular segment $T :=(a_1,\ldots,a_l)$ satisfies $a_1 + \cdots + a_l \in X$. Indeed: the initial segments of $S_k$ are $S_1,\ldots,S_k$. If $x < g$, then the Lie-regular segments are again precisely $S_1,S_2,\ldots,S_k$. Else, $S_k$ has no Lie-regular segments at all. \end{proof}

This was the theorem used to prove theorem \ref{TheoremArithmeticallyFreeSupport}. If we specialise even more ($G := \Z_p$, for some prime $p$, and $X := \Z_p \setminus \{ 0 \}$), then we recover the central result (lemma $3$) of \cite{Higman}. In particular: $h(p) \leq H(p-1)$, for all primes $p$. \newline

Before sketching the proof of theorem \ref{TheoremAlphabet}, we present some useful invariants.

\begin{definition}[Weight, Span, Content] Consider a finite alphabet $(A,<,\omega,X)$, and let $S := (a_1,\ldots,a_k)$ be a sequence on $A$. The \emph{weight} of $S$ is $\omega_1(S) := \omega(a_1) + \cdots + \omega(a_k)$. The \emph{span} of $S$ is $\sigma(S) := \{ 0 , \omega(a_1), \omega(a_1) + \omega(a_2), \ldots, \omega(a_1) + \cdots + \omega(a_k) \} \subseteq G.$ We also define $\alpha(S) := \omega^{-1}(X) \cap \{ a_1,\ldots,a_k \} \subseteq A$. The \emph{content} of $S$ is $\kappa(S) := (|\sigma(S)|,|\alpha(S)|) \in \N \times \N.$ \end{definition}

Let us \emph{informally} define a sequence $S := (a_1,\ldots,a_k) \in A^k$ on $A$ to be \emph{full}, if $S$ satisfies the conclusions of theorem \ref{TheoremAlphabet} in a particularly strong sense. (For a more precise definition, we refer to \ref{DefinitionFullness} and \ref{LemmaRegularityImpliesLieRegularity}).

\paragraph{Strategy of the proof.} This will be a (relatively simple) induction on the type of the sequence. Let us give $\N \times \N$ the lexicographical order $<_{\operatorname{Lex}}$, and let us fix a finite alphabet $\mathcal{A} := (A,<,\omega,X)$. Let us first show that a finite sequence $S$ on $A$ satisfying $\| S \| \geq H_{|X|}(\kappa(S))$ is full. If $\kappa(S) \in \N \times \{ 1 \}$ or $\{ 1 \} \times \N$, or if $|\sigma(S)| > |X|$, then $S$ will be full for trivial reasons. In the other case, we construct a new finite alphabet $\mathcal{A}' := (A',<',\map{\omega'}{A'}{G},X)$, and a new sequence $S'$ on $A'$ such that:
\begin{enumerate}
\item If $S'$ is full (w.r.t. $\mathcal{A}'$), then $S$ is full (w.r.t. $\mathcal{A}$),
\item $\kappa(S') <_{\operatorname{Lex}} \kappa(S)$, and
\item $\| S' \| \geq H_{|X|}(\kappa(S'))$.
\end{enumerate}
The induction step then yields the above claim. By construction, the map $\map{H_{|X|}}{\N \times \N}{\N}$ is monotone increasing in both arguments. Now suppose that $\| S \| \geq H_{|X|}(|X|,|\omega^{-1}(X)|)$. If $|\sigma(S)| \leq |X|$, then $S$ is full by the above. Else, $|\sigma(S)| > |X|$, so that $S$ is trivially full. This finishes the proof.

\subsection{Restrictions and shifts of alphabets}

\begin{definition}[Restriction] Consider a finite alphabet $\mathcal{A} := (A,<,\map{\omega}{A}{G},X)$ and a subset $A'$ of $A$. Let $<'$ be the restriction of $<$ to $A'$, and let $\map{\omega'}{A'}{G}$ be the restriction of $\omega$ to $A'$. Then we say that the finite alphabet $(A',<',\map{\omega'}{A'}{G},X)$ is the \emph{restriction} of $\mathcal{A}$ to $A'$. \end{definition}

If $A$ is a set, we define $W^0(A) := A$, and we let $W(A) : W^1(A)$ be the set of finite sequences on $A$. Inductively, we then define $W^{n+1}(A) := W(W^n(A))$, for all $n \in \N$. We also define $$W^\ast(A) := \bigcup_{n \geq 0} W^n(A).$$

The length $\| S \|$ of an element $S := (x_1,\ldots,x_k) \in W^{n+1}(A)$, with $x_1,\ldots,x_k \in W^n(A)$, is defined to be $k$.

\begin{definition}[Shift] Consider a finite alphabet $\mathcal{A} := (A,<,\map{\omega}{A}{G},X)$. Let $<_1$ be the lexicographical order on $W(A)$, and define $\mapl{\omega_1}{W(A)}{G}{(x_1,\ldots,x_k)}{\omega(x_1) + \cdots + \omega(x_k)}.$ Then $(W(A),<_1,\omega_1,X)$ is again an alphabet, the \emph{shift} of $\mathcal{A}$. By iteration, we obtain a family of alphabets $(W^n(A),<_n,\omega_n,X)_{n \in \N}$. For $S := (x_1,\ldots,x_k) \in W(W^n(A))$, we define the \emph{span} to be $ \sigma(S) := \omega_n(x_1) + \cdots + \omega_n(x_k).$ \end{definition}

\subsection{Regularity and Lie-regularity}

We recall that Lie-regularity was defined by means of identities (linearisations) in the free Lie algebra. It is possible, and it will be convenient, to define a family Lie-regular elements by combinatorial means, and without having to invoke permutations. This will be the family of (simply) regular elements with respect to a fixed alphabet.

\begin{definition}[Regularity and type] Consider an alphabet $\mathcal{A} := (A,<,\omega,X)$. All elements of $W^0(A) = A$ are regular and of the same type. Now let $n \in \N \cup \{ 0 \}$. An element $(x_1,\ldots,x_k)$ of $W^{n+1}(A)$, with $x_1,\ldots,x_k \in W^n(A)$, is regular, iff
\begin{enumerate}
\item The elements $x_1,\ldots,x_k$ are all regular, and
\item The elements $x_1,\ldots,x_k$ are all of the same type, and
\item $\exists 1 \leq l < k$ such that $x_1 = x_2 = \cdots = x_l <_n x_{l+1},\ldots,x_k$.
\end{enumerate}
Two regular elements $(x_1,\ldots,x_k)$ and $(y_1,\ldots,y_l)$ of $W^{n+1}(A)$ are of the same type, iff $x_1 = y_1$. 
\end{definition}

\begin{definition}[Underlying sequence] Let $(A,<)$ be a totally-ordered set. We define the family $(\map{\pi_n}{W^{n+1}(A)}{W^{n}(A)})_{n \geq 0}$ of maps by $$ \pi_{n}(((x_1,\ldots,x_k),\ldots,(z_1,\ldots,z_m))) := (x_1,\ldots,x_k,\ldots,z_1,\ldots,z_m) \in W^{n}(A) .$$ We further define the map $\map{\pi}{W^\ast(A)}{W(A)}$, by: for all $S \in W^n(A)$, 
\begin{eqnarray*}
\pi(S) &:=& \pi_1 \circ \cdots \circ \pi_{n-1} (S) \in W(A) \text{ if } n \geq 2, \\
\pi(S) &:=& S \text{ if } n = 1, \\
\pi(S) &:=& (S) \text{ if } n = 0.
\end{eqnarray*}
The \emph{underlying sequence} of an element $S \in W^{\ast}(A)$ is defined to be $ \pi (S) \in W(A) .$ \end{definition}

\begin{lemma}[Regularity implies Lie-regularity] \label{LemmaRegularityImpliesLieRegularity} Consider an alphabet $\mathcal{A} := (A,<,\omega,X)$. If an element $S \in W^{n+1}(A)$ is regular, then the underlying sequence of $S$ is Lie-regular. \end{lemma}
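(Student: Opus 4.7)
My plan is to induct on the level $n \ge 0$. The base case $n=0$ is immediate: an element $S=(x_1,\ldots,x_k)\in W^1(A)$ equals its own underlying sequence, and the left-associative Lie product $P := [v_1,v_2,\ldots,v_k]$ has trivial linearisation $\mathcal{S}_P=\{\id\}$, so the Lie-regularity condition on $\pi(S)$ is satisfied vacuously.

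For the inductive step, suppose the lemma holds for elements of $W^{n+1}(A)$, and let $S=(X_1,\ldots,X_k)\in W^{n+2}(A)$ be regular. Then each $X_i\in W^{n+1}(A)$ is regular, all $X_i$ are of the same type (so they share a common first coordinate $Y\in W^n(A)$), and there exists $1\le l<k$ with $X_1=\cdots=X_l<_{n+1}X_{l+1},\ldots,X_k$. By the induction hypothesis, each $\pi(X_i)\in W(A)$ is Lie-regular, witnessed by some Lie product $P_i$, and since $X_1=\cdots=X_l$ we may take $P_1=\cdots=P_l$. My plan is to assemble a Lie product $P$ on the leaves of the flattened sequence $\pi(S)=\pi(X_1)\cdot\pi(X_2)\cdots\pi(X_k)$ by combining the $P_i$'s in an outer left-associative way, for instance $P := [P_1,P_2,\ldots,P_k]$. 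The linearisation $\mathcal{S}_P$ then decomposes into ``intra-block'' permutations acting within a single $P_i$ and ``inter-block'' permutations arising from Jacobi expansions of the outer bracket.

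To verify Lie-regularity of $\pi(S)$, I would check that every non-identity $\sigma\in\mathcal{S}_P$ yields a lex-larger flattened sequence. Intra-block permutations are handled directly by the induction hypothesis: each $P_i$ forces $\pi(X_i)$ to be lex-minimal within its own orbit, and the remaining blocks are untouched. For inter-block permutations, the two key levers are the same-type hypothesis, which implies through the hierarchy that every $\pi(X_i)$ begins with the same sequence $\pi(Y)$, and the regular inequality $X_i<_{n+1}X_j$ for $i\le l<j$, which combined with the equalities $X_1=\cdots=X_l$ is designed to control the leaves at the first position where a Jacobi-induced shuffle of blocks can diverge from $\pi(S)$.

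The main obstacle I anticipate is precisely this inter-block analysis. A single Jacobi expansion of the outer bracket does not cleanly swap whole blocks but interleaves their leaves in intricate ways, so ruling out a compensating ``early'' lex decrease requires a careful case analysis keyed to the first position of deviation from $\pi(S)$. I would approach this via a secondary induction on the nesting depth of $P$, at each stage invoking the hierarchical regularity (the tower of alternating equalities and strict inequalities inherited from the levels $W^{n+1}(A), W^n(A),\ldots,W^0(A)$) to guarantee that the first divergent leaf lies in a strictly larger block. This is the step in which the combinatorial definition of regularity does its real work.
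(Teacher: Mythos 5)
Your base case is wrong, and it is wrong in a way that trivialises the whole notion. If the witnessing product $P$ is allowed to be the left-associative word itself, so that $\mathcal{S}_P=\{\id\}$ and the defining condition holds vacuously, then \emph{every} sequence is Lie-regular --- including the constant sequence $(a,a,a)$, which the paper's own example explicitly exhibits as \emph{not} Lie-regular, and whose failure to be Lie-regular is essential for the converse direction of Theorem \ref{TheoremCharacterisation}. The linearisation in Definition \ref{DefinitionLieRegularity} is taken in the context $[u_1,\ldots,u_k,P,w_1,\ldots,w_m]$: even a left-associative $P=[v_1,\ldots,v_l]$ with $l\geq 2$ expands non-trivially there (e.g.\ $[u,[v_1,v_2]]=[u,v_1,v_2]-[u,v_2,v_1]$, so $\mathcal{S}_P=\{\id,(12)\}$), and the vacuous witness you invoke is not available. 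The case $n=0$ is in fact the combinatorial heart of the lemma: given $x_1=\cdots=x_l<x_{l+1},\ldots,x_k$ you must \emph{construct} a genuinely nested product --- as in the paper's example, $[x_1,[x_2,x_3]]$ for $(a,a,b)$ and $[[x_1,x_2],x_3]$ for $(a,b,b)$ --- and then verify that every non-identity permutation in its linearisation moves a strictly larger letter to an earlier position. Your write-up does none of this.

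The inductive step is likewise not a proof but a plan: you yourself flag the ``inter-block analysis'' as the main obstacle and defer it to a case analysis you do not carry out, and that analysis is exactly where the work lies. Moreover your choice of a left-associative outer product $[P_1,\ldots,P_k]$ is unlikely to survive it: the outer bracketing must itself encode the position $l$ at which the regular element passes from the repeated minimal block to the strictly larger ones, just as in the base case. For comparison, the paper does not reprove the statement at all; it cites Lemma 6 of \cite{Higman}, which performs precisely this induction on $n$ and constructs $\mathcal{S}_P$ and $\varepsilon_P$ explicitly. As it stands your argument establishes neither the base case nor the induction step.
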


\begin{proof} We refer to lemma $6$ of \cite{Higman} for the proof. It inducts on $n \geq 0$, and it implicitly constructs the set $\mathcal{S}_P \subseteq \operatorname{Sym}(r)$ and the map $\map{\varepsilon_P}{\mathcal{S}_P}{\{ \pm 1\}}$. \end{proof}

This observation, together with theorem \ref{TheoremCharacterisation}, inspires us to define fullness.

\begin{definition}[Fullness] \label{DefinitionFullness} Consider an alphabet $\mathcal{A} := (A,<,\omega,X)$. A sequence $S := (a_1,\ldots,a_k) \in A^k$ on $A$ is \emph{full}, iff: $(1)$ $\sigma(S)$ cannot be translated into $X$, or $(2)$ some segment $T$ of $S$ is the underlying sequence of a regular element of $W^\ast(A)$ with $\omega_1(T) \in G \setminus X$. \end{definition}

\begin{example} Consider an alphabet $\mathcal{A} := (A,<,\omega,X)$ and the elements $a < b < c$ in $A$. We define the sequences $R := (a,b,c), S := (a,a,b), T := (a,b,b)$ and $U := (a,a,a)$ on $X$. Then $R, S, T$ are trivially regular, while $U$ trivially fails to be regular. Let $F$ be a free Lie algebra with free generating set $x,x_1,x_2,x_3$ of $F$. Define $P := [x_1,[x_2,x_3]]$ and $Q := [[x_1,x_2],x_3]$. The Jacobi-identity then gives the linearisations:
\begin{eqnarray*}
\phantom{o} [x,P] & = & [x,x_1,x_2,x_3] - [x,x_1,x_3,x_2] - [x,x_2,x_3,x_1] + [x,x_3,x_2,x_1], \\
\phantom{o} [x,Q] & = & [x,x_1,x_2,x_3] - [x,x_2,x_1,x_3] - [x,x_3,x_1,x_2] + [x,x_3,x_2,x_1].
\end{eqnarray*}
So $\mathcal{S}_P := \{ \id, (23) ,(123) , (13) \}$ and $\mathcal{S}_Q := \{ \id, (12) , (132) , (13) \}$. We see that $S$ is Lie-regular because of the first identity, and that $T$ is Lie-regular because of the second identity. Also: either identity guarantees that $R$ is Lie-regular. Since $U$ is constant, every permutation will fix $U$. We conclude that $U$ is not Lie-regular. 
\end{example}

\subsection{Hyper-derivations and fullness}

\begin{definition}[Derivations and hyper-derivations] Consider a finite alphabet $(A,<,\omega,X)$. A \emph{derivation} of an element $S \in W(A)$ is a regular element $T\in W^2(A)$ such that the underlying sequence $\pi(T)$ of $T$ is a segment of $S$. We say that $T$ is a \emph{hyper-derivation} of $S$, iff in addition $|\sigma(T)| < |\sigma(S)|$. \end{definition}

In particular: if $T$ is a hyper-derivation of $S$, then $\kappa(T) <_{\operatorname{Lex}} \kappa(S)$.

\begin{lemma} \label{LemmaFullShift} Consider an alphabet $\mathcal{A}$ and its shift $\mathcal{B}$. Let $S$ be a finite sequence on $\mathcal{A}$, and let $T$ be a derivation of $S$. If $T$ is full w.r.t. $\mathcal{B}$, then $S$ is full w.r.t. $\mathcal{A}$. \end{lemma}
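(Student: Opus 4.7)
The plan is to unpack the two disjunctive conditions that make $T$ full with respect to $\mathcal{B}$ and, in each case, transfer a witness of fullness back to $S$ in $\mathcal{A}$. Write $\pi(T) = (c_1,\ldots,c_m)$ for the flattening of $T$, which by hypothesis is a segment of $S$, and decompose $S = (b_1,\ldots,b_p, c_1,\ldots,c_m, d_1,\ldots,d_q)$, setting $h := \omega(b_1)+\cdots+\omega(b_p)$.

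First, in the span case: since $\omega_1(t_r)$ equals the partial sum of $\omega$ over the letters constituting $t_r$, the partial sums defining $\sigma_\mathcal{B}(T)$ form a sub-collection of those appearing in $\sigma_\mathcal{A}(\pi(T))$, taken at the block boundaries, so $\sigma_\mathcal{B}(T) \subseteq \sigma_\mathcal{A}(\pi(T))$ and hence $h + \sigma_\mathcal{B}(T) \subseteq \sigma_\mathcal{A}(S)$. Therefore any $g$ translating $\sigma_\mathcal{A}(S)$ into $X$ makes $g+h$ translate $\sigma_\mathcal{B}(T)$ into $X$; contrapositively, the first clause of fullness for $T$ yields the first clause of fullness for $S$.

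In the regular-segment case I would reuse the same combinatorial object. Suppose a segment $T'$ of $T$ equals the $\mathcal{B}$-underlying sequence of a regular $U \in W^{n+1}(W(A))$ with $\omega_2(T') \in G \setminus X$. Under the canonical identification $W^{n+1}(W(A)) = W^{n+2}(A)$, I would re-interpret the same $U$ as an element of $W^\ast(A)$; its $\mathcal{A}$-underlying sequence is then the one-step further flattening $\pi_1(T')$, which is a concatenation of the $t_r$'s constituting $T'$ and therefore a segment of $\pi(T)$, and hence of $S$. Its $\mathcal{A}$-weight equals $\omega_2(T')$ because both flattenings sum the same $\omega$-values, so it lies in $G \setminus X$.

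The only delicate point, and the main obstacle, is to verify that this same $U$ is also regular with respect to $\mathcal{A}$. I would proceed by induction on $n$, crucially invoking that $T$, being a derivation, is regular in $\mathcal{A}$, so the $t_r$'s are regular level-$1$ sequences in $\mathcal{A}$ sharing a common first letter in $A$. At $n=0$, $U$ is a single $t_r$ and this is precisely what is required. For the inductive step the lexicographic order $<_1$ on $W(A)$ is the same in both alphabets, so the linear-ordering condition transfers without change; above the leaves, the ``same first entry'' clause of ``same type'' refers to the very same object whether the parent tuple is read in $W^n(W(A))$ or in $W^{n+1}(A)$; and at the leaves, the gap between the trivially-satisfied ``same type'' in $\mathcal{B}$ and the non-trivial ``same first letter in $A$'' in $\mathcal{A}$ is filled exactly by the regularity of $T$. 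Once this level-by-level matching of regularity conditions is recorded, $U$ certifies fullness of $S$ via clause~$(2)$ of Definition~\ref{DefinitionFullness}.
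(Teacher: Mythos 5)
Your proof is correct and follows essentially the same route as the paper's own (two-sentence) argument: the translation $h+\sigma_{\mathcal{B}}(T)\subseteq\sigma_{\mathcal{A}}(S)$ handles the span clause, and re-reading a $\mathcal{B}$-regular witness in $W^{\ast}(W(A))$ as an element of $W^{\ast}(A)$ handles the regular-segment clause. The only substance the paper leaves implicit is exactly the point you single out and resolve correctly, namely that regularity over $\mathcal{B}$ upgrades to regularity over $\mathcal{A}$ because the derivation $T$ supplies regularity and common type of the bottom-level letters $t_r\in W(A)$.
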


\begin{proof} Note that $\sigma(T)$ can be translated into $\sigma(S)$. Note also that a regular segment $U$ of $T$ (w.r.t. $\mathcal{B}$) is a derivation of $S$, with the same weight as its underlying sequence (w.r.t. $\mathcal{A}$). \end{proof}

Let us now construct these hyper-derivations.

\begin{lemma}[Existence of obvious derivations] \label{LemmaObviousDerivations} Consider an alphabet $(A,<,\omega,X)$ and an element $S := (a_1,\ldots,a_k) \in W(A)$. Let $a := \min \{ a_1 , \ldots, a_k \}$. Suppose that there exist natural numbers $1 \leq i_1 < j_1 < i_2 < j_2 < \cdots < i_s < j_s \leq k$ such that for all $t \in \{ 1,2,\ldots,s \}$: $a_{i_t} = a$ and $a_{j_t} > a$. Then $S$ has a derivation of length $s$. \end{lemma}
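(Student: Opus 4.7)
The plan is to construct the derivation $T = (x_1,\ldots,x_s) \in W^2(A)$ by partitioning a contiguous segment of $S$ into $s$ consecutive pieces, each of which begins with $a$ and contains at least one entry strictly greater than $a$. The underlying sequence $\pi(T)$ will then be a segment of $S$ by construction, and regularity of each $x_t$ in $W(A)$ will follow from the fact that $a$ is the minimum of the entries of $S$.

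First, I would replace the given indices by a cleaner set adapted to the structure of $S$. Because the $j_t$ with $a_{j_t} > a$ forces $i_t$ and $i_{t+1}$ to lie in different maximal runs of $a$ in $S$, the positions $i_1,\ldots,i_s$ belong to pairwise distinct runs; in particular the segment $(a_{i_1},\ldots,a_{j_s})$ contains at least $s$ maximal runs of $a$. Let $i_1^\ast < \cdots < i_s^\ast$ be the starting positions of the first $s$ such runs in this segment. Between any two consecutive runs there is at least one entry strictly greater than $a$, and after the $s$-th run the entry $a_{j_s}$ itself (or an earlier one) provides such a witness.

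Second, I would set $x_t$ equal to the contiguous sub-segment of $S$ running from $i_t^\ast$ up to the position just before $i_{t+1}^\ast$ for $t < s$, and $x_s$ to extend from $i_s^\ast$ up to $j_s$. By the refinement above, each $x_t$ has the shape $(a,a,\ldots,a,c_{t,1},\ldots,c_{t,m_t})$ with every $c_{t,j} > a$ and $m_t \geq 1$, hence is regular in $W(A)$. All $x_t$ begin with $a$, so they are pairwise of the same type, and the concatenation $\pi(T) = (a_{i_1^\ast},\ldots,a_{j_s})$ is a contiguous segment of $S$.

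The main obstacle is verifying condition (3) in the definition of regularity for $T$ itself as an element of $W^2(A)$, namely the existence of $1 \leq l < s$ with $x_1 = \cdots = x_l$ and $x_l <_1 x_i$ for all $i > l$ in the lexicographic order on $W(A)$. Such a monotone pattern among the $x_t$ is not forced by the hypothesis alone, so the crux of the proof is to arrange the partition so that the initial blocks coincide to the required extent while the subsequent blocks are strictly lex-larger. I would try to achieve this either by a sharper choice of which $s$ runs among the (possibly more than $s$) runs in $[i_1,j_s]$ to include, trimming or extending each $x_t$ within its run to align common prefixes, or by a Ramsey/pigeonhole argument that extracts from the $s$ candidate pieces a sub-configuration exhibiting the required equal-prefix/strict-increase shape.
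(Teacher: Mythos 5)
Your setup is the natural one---cutting a segment of $S$ into consecutive blocks of the form $(a,\ldots,a,c_1,\ldots,c_m)$ with all $c_j>a$ does give components that are individually regular and pairwise of the same type---and you are also right that condition (3) of regularity at the level of $W^2(A)$, namely the existence of $1\leq l<s$ with $x_1=\cdots=x_l <_1 x_{l+1},\ldots,x_s$, is where all the content lies. But that is precisely the step you do not carry out, and it is a genuine gap rather than a routine verification. Concretely, take $A=\{a,b,c\}$ with $a<b<c$ and $S=(a,c,a,b)$, which satisfies the hypothesis with $s=2$ via $i_1=1<j_1=2<i_2=3<j_2=4$. Your construction yields $x_1=(a,c)$ and $x_2=(a,b)$, and $(a,c)>_1(a,b)$, so condition (3) fails for $l=1$, and the blocks are unequal so no other $l$ is available. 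None of your proposed repairs applies to this instance: both runs of $a$ have length one, so there is nothing to trim or realign; there are no surplus runs to choose among; and the underlying sequence of a derivation must be a \emph{contiguous} segment of $S$, which rules out the subsequence extraction a Ramsey or pigeonhole argument would provide. So the ``equal prefix, then strictly lex-larger'' pattern has to be engineered by a more global mechanism (re-choosing which occurrences $i_t,j_t$ to use, splitting long runs of the minimal letter into singleton blocks $(a)$, shifting the starting run, or an induction on $s$ that restructures the witnesses), and your proposal does not supply one.

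For what it is worth, the paper gives you nothing to compare against: its entire proof of Lemma \ref{LemmaObviousDerivations} is the sentence ``This is a simple induction on $s$.'' Even so, as written your argument establishes only conditions (1) and (2) of regularity for the derived element; until you exhibit a choice of segment and partition that provably satisfies condition (3)---and the example above shows this cannot be done merely by adjusting the blocks you have already fixed---the lemma is not proved.
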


\begin{proof} This is a simple induction on $s \in \N$. \end{proof}

\begin{definition} We define the family of functions $\map{(f_n)_{n \in \N}}{\N \times \N}{\N}$ to be the unique solution of the recurrence relation $f_n(\alpha+1,\lambda) = \lambda \cdot (f_n(\alpha,\lambda) + n),$ with initial conditions $f_n(1,\lambda) := n$. We also define the map $\mapl{f}{\N \times \N}{\N}{(\alpha,\lambda)}{\lambda \cdot \alpha^\lambda}.$ \end{definition}

Although we will not be needing the exact solution of this recurrence, we can mention that the maps $f_n(\alpha,\lambda)$ are a polynomial in $\lambda$ of degree $\alpha-1$: $f_n(\alpha,\lambda) = n \cdot (\lambda^{\alpha-1} + \sum_{1 \leq j \leq \alpha-1} \lambda^j) .$ We note that for all $u,v,w \in \N$, $f_w(u,v) \leq w \cdot f (v,u)$. Also: the number of sequences of length at most $v$ on $u$ letters is at most $f(u,v)$.

\begin{lemma} Consider a finite alphabet $\mathcal{A} := (A,<,\omega,X)$ and sequence $S \in W(A)$. If $\sigma(S)$ can be translated into $X$, then $|\sigma(S)| \leq |X|$, and $\sigma(S)$ does not contain $\mathcal{O}_{|X|}(x,y)$ with $x \in G$ and $y \in X$. \end{lemma}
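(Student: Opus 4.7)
The plan is to dispatch both conclusions directly from the definitions, using only the hypothesis that $X$ is arithmetically-free (recall that arithmetic-freeness is built into the definition of an alphabet).

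For the cardinality bound, I would first note that by assumption there exists $h \in G$ with $h + \sigma(S) \subseteq X$. Since translation is a bijection $G \to G$, we have $|\sigma(S)| = |h + \sigma(S)| \le |X|$. This handles the first claim in one line.

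For the second claim, I would argue by contradiction. Suppose $\sigma(S)$ contains $\CO_{|X|}(x,y)$ for some $x \in G$ and $y \in X$. Then translation by $h$ yields
\[
\CO_{|X|}(h+x,y) \;=\; h + \CO_{|X|}(x,y) \;\subseteq\; h + \sigma(S) \;\subseteq\; X,
\]
since the operation $G \to G : z \mapsto h+z$ commutes with the orbit construction. In particular the base point $h+x$, being the first element of this orbit, lies in $X$, and $y$ lies in $X$ by hypothesis. But then $\CO_{|X|}(h+x,y) \subseteq X$ with $h+x, y \in X$ violates condition $(2)$ of the definition of an arithmetically-free subset of an abelian group, applied to the ambient abelian group $G$. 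This contradiction establishes the second claim.

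The only subtlety is that arithmetic-freeness in the alphabet is imposed relative to the abelian group $G$ (the codomain of $\omega$), so both the orbit manipulation and the invocation of the forbidden configuration take place inside $G$, where translation really is a group-automorphism of the underlying set; no pairwise-commuting reduction is needed. There is no real obstacle here — the entire proof is a one-step reduction to the definition, and I would expect to write it in four or five lines.
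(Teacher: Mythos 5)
Your proof is correct and follows essentially the same route as the paper's: the cardinality bound comes from translation being a bijection of $G$, and the orbit claim from translating $\mathcal{O}_{|X|}(x,y)$ to $\mathcal{O}_{|X|}(h+x,y)\subseteq X$ and invoking condition $(2)$ of arithmetic-freeness. The paper's version is just a terser statement of the same two observations.
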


\begin{proof} $(1).$ This follows from the fact that translations of subsets in $G$ preserve cardinality. $(2).$ If $\mathcal{O}_{|X|}(x,y) \subseteq \sigma(S)$, then $\mathcal{O}_{|X|}(u+x,y) \subseteq u + \sigma(S)$. \end{proof}

\begin{lemma}[Existence of derivations] \label{LemmaDerivations} Consider a finite alphabet $(A,<,\omega,X)$ and an element $S := (a_1,\ldots,a_k) \in W(\omega^{-1}(X))$ such that $\sigma(S)$ can be translated into $X$. If there is an $l \in \N$ with $k \geq f_{|X|}(|\alpha(S)|,l)$, then $S$ has a derivation of length $l$. \end{lemma}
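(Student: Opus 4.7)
The plan is to induct on $\alpha := |\alpha(S)|$, with $l$ and the alphabet $(A,<,\omega,X)$ held fixed.

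For the base case $\alpha = 1$, the sequence $S$ consists of $k$ copies of a single letter $a \in \omega^{-1}(X)$, so $\sigma(S) = \mathcal{O}_k(0,\omega(a))$. Since $k \geq f_{|X|}(1,l) = |X|$ and $\omega(a) \in X$, the preceding lemma (forbidding $\mathcal{O}_{|X|}(x,y) \subseteq \sigma(S)$ with $y \in X$) shows that $\sigma(S)$ cannot be translated into $X$, contradicting the hypothesis; the base case therefore holds vacuously.

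For the inductive step, suppose $|\alpha(S)| = \alpha + 1 \geq 2$ and $k \geq f_{|X|}(\alpha + 1,l) = l\bigl(f_{|X|}(\alpha,l) + |X|\bigr)$. Let $a := \min\{a_1,\ldots,a_k\}$ in $(A,<)$ and decompose $S = B_0 A_1 B_1 A_2 B_2 \cdots A_r B_r$ into its maximal alternating runs, where each $A_i$ is a maximal block of letters equal to $a$ and each $B_j$ is a (possibly empty) block of letters strictly greater than $a$ (using the minimality of $a$). By block maximality, the interior blocks $B_1,\ldots,B_{r-1}$ are nonempty, while $B_0$ and $B_r$ may be empty. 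If $r \geq l + 1$, or $r \geq l$ together with $B_r$ nonempty, then choosing the first letter of each $A_t$ and each $B_t$ for $t = 1,\ldots,l$ yields alternating indices $i_1 < j_1 < \cdots < i_l < j_l$ with $a_{i_t} = a$ and $a_{j_t} > a$, and Lemma \ref{LemmaObviousDerivations} immediately supplies a derivation of length $l$.

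In the remaining case ($r \leq l - 1$, or $r = l$ with $B_r$ empty), the aim is to find a single $B$-block of length at least $f_{|X|}(\alpha,l)$ and invoke the induction hypothesis on it: any such $B_j$ is a segment of $S$, so its span still translates into $X$, it contains no $a$ so $|\alpha(B_j)| \leq \alpha$, and any derivation of $B_j$ is a fortiori a derivation of $S$. The crucial counting ingredient is that each $A_i$ satisfies $|A_i| < |X|$, for otherwise $\sigma(A_i)$ would contain $\mathcal{O}_{|X|}(0,\omega(a))$ with $\omega(a) \in X$, contradicting the arithmetic-freeness of $X$ via the preceding lemma. Hence $m := \sum_i |A_i| < r|X| \leq l|X|$, so the $B$-blocks carry a total length $k - m > l f_{|X|}(\alpha,l)$; in either subcase this is distributed across at most $l$ nonempty blocks (since either $r + 1 \leq l$, or $r = l$ and $B_r$ is empty), so the pigeonhole principle produces some $B_j$ with $|B_j| > f_{|X|}(\alpha,l)$, as required. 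The main obstacle is checking that the constants in the recurrence $f_{|X|}(\alpha + 1,l) = l(f_{|X|}(\alpha,l) + |X|)$ are tight enough: the multiplicative factor $l$ exactly absorbs the pigeonhole over the $\leq l$ effective $B$-blocks, while the additive term $|X|$ compensates precisely for up to $l$ forbidden long $A$-blocks of length $\leq |X| - 1$.
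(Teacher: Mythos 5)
Your argument is correct, but it organises the induction step differently from the paper. The paper also normalises to $k = f_{|X|}(|\alpha(S)|,l)$ and inducts on $|\alpha(S)|$, but instead of decomposing $S$ into maximal runs relative to the minimal letter, it cuts $S$ \emph{a priori} into $2l$ consecutive blocks $S_1T_1S_2T_2\cdots S_lT_l$ with $\| S_j \| = f_{|X|}(\alpha-1,l)$ and $\| T_j \| = |X|$, exactly realising the recurrence; the case split is then on letter content rather than on alternation count: if some $S_j$ omits a letter of $\alpha(S)$, the induction hypothesis applies to that $S_j$, and otherwise every $S_j$ contains the minimal letter $y$, whereupon the trailing $T_j$ of length $|X|$ together with the orbit obstruction forces, inside each block, an occurrence of $y$ followed by a strictly larger letter, so Lemma \ref{LemmaObviousDerivations} applies. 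Your version replaces the fixed partition by maximal runs plus a pigeonhole count, but it runs on the same two ingredients --- the orbit lemma capping constant runs of the minimal letter at length $<|X|$, and Lemma \ref{LemmaObviousDerivations} in the highly-alternating case --- and the same recurrence, so it yields the identical bound; what it buys is transparency, since your closing remark about why the constants in $f_{|X|}(\alpha+1,l) = l(f_{|X|}(\alpha,l)+|X|)$ are exactly tight is essentially the paper's partition read backwards, and your alternation-count dichotomy makes the application of Lemma \ref{LemmaObviousDerivations} cleaner than the paper's rather terse ``no block is constant'' step. Two points you should make explicit: the induction must be complete induction on $|\alpha(\cdot)|$, since the block $B_j$ handed to the induction hypothesis may have $|\alpha(B_j)|$ strictly smaller than $\alpha$, and you then also need the (immediate) monotonicity of $f_{|X|}(\cdot,l)$ in its first argument; the paper's proof requires the same two remarks when it passes to an $S_j$ missing a letter.
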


\begin{proof} We may suppose that $k = f_{|X|}(|\alpha(S)|,l)$. Let proceed by induction on $\alpha := |\alpha(S)|$. If $\alpha = 1$, then $a_1 = \cdots = a_k$, and $\mathcal{O}_{|X|}(0,a_1) \subseteq \sigma(S)$. This contradicts our assumption. Now suppose that $\alpha > 1$. Then there exist $S_1,\ldots,S_l, \in W(\alpha(S))$ of length $f_{|X|}(\alpha -1,l)$, and $T_1,\ldots,T_l \in W(\alpha(S))$ of length $|X|$, such that $$ S = \pi_1( (S_1,T_1,S_2,T_2,\ldots,S_l,T_l) ) .$$ 

If some $S_j$ does not use all letters of $\alpha(S)$, then we may use the induction hypothesis. Else, every $S_j$ uses every letter of $\alpha(S)$. Then $y := \min \alpha(S) $ occurs in every $S_j$. If any of the $\pi_1((S_j,T_j))$ is constant, then $\sigma(S)$ contains an orbit $\mathcal{O}_{|X|}(u,v)$, with $v \in X$. This contradicts our assumption on $\sigma(S)$. So we may apply \emph{Lemma \ref{LemmaObviousDerivations}} to conclude that $S$ has a derivation of length $l$.
\end{proof}

\begin{proposition}[Existence of hyper-derivations] \label{PropositionHyperDerivations} Consider a finite alphabet $(A,<,\omega,X)$ and an element $S := (a_1,\ldots,a_k) \in W(\omega^{-1}(X))$ such that $\sigma(S)$ can be translated into $X$. If $k \geq |X| \cdot f(|\omega^{-1}(X)|, |X| + l + 1)$, for some $l \in \N$, then $S$ a hyper-derivation of length at least $l$. \end{proposition}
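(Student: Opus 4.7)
Set $s:=|\sigma(S)|$, $U:=|\omega^{-1}(X)|$ and $M:=|X|+l+1$. Arithmetic freeness of $X$ forces $0\notin X$, and since every $\omega(a_i)\in X$ we have $p_1\neq p_0$, so $s\geq 2$. The plan is to build a regular $T=(T_1,\ldots,T_r)\in W^2(A)$ whose blocks $T_i\in W(A)$ are concatenations of consecutive \emph{excursions} of zero weight; then $\omega_1(T_i)=0$ forces $\sigma(T)=\{0\}$, whence $|\sigma(T)|=1<s$, and $T$ is automatically a hyper-derivation as soon as $r\geq l$.

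Pigeonhole on the $k+1$ partial sums $p_0,\ldots,p_k\in\sigma(S)$, combined with the hypothesis $k\geq|X|\cdot f(U,M)$, yields a value $v\in\sigma(S)$ with visit times $i_0<i_1<\cdots<i_N$ and $N$ on the order of $f(U,M)=M\cdot U^M$. The excursions $R_j:=S_{[i_{j-1}+1,\,i_j]}\in W(\omega^{-1}(X))$ satisfy $\omega_1(R_j)=v-v=0$, and so does every concatenation of consecutive ones; assembling such concatenations as the $T_i$'s takes care of the span condition for free. To make $T$ regular we still need the $T_i$'s to share a common first letter and to form a pattern $T_1=\cdots=T_q<T_{q+1},\ldots,T_r$. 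A first pigeonhole over the $\leq U$ possible first letters of the $R_j$'s retains a fraction $1/U$ of them with a common first letter $c$, in which we anchor the starts of the $T_i$'s; a second, nested pigeonhole is then applied along $R_1,\ldots,R_N$ in its native order, using that there are at most $f(U,M)$ sequences of length $\leq M$ over $\omega^{-1}(X)$, so that among sufficiently many candidate blocks the pattern must stabilise long enough to yield an initial run of equal blocks followed by strictly larger ones of combined length $r\geq l$. The choice $M=|X|+l+1$ is calibrated for exactly this nested pigeonhole, and the extra factor $|X|$ in the hypothesis absorbs the preliminary pigeonhole on $\sigma(S)$.

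The principal obstacle is the second, nested pigeonhole: the blocks $T_i$ must tile a \emph{connected} segment of $S$, so one cannot subsample freely among the excursions starting with $c$ but must operate along their native order. This is the same consecutiveness-preserving combinatorial extraction that drives Lemmas~\ref{LemmaObviousDerivations} and~\ref{LemmaDerivations}, and the plan is to adapt their iterative-pigeonhole mechanism to the shifted alphabet of excursions -- where the role played by arithmetic freeness in the original argument is taken over by the triviality $\omega_1(R_j)=0$, which prevents an equality run $T_1=\cdots=T_q$ from being extended indefinitely without some later block becoming strictly larger.
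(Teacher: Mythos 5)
Your opening reductions are fine ($0\notin X$, hence $|\sigma(S)|\geq 2$, and the pigeonhole over the at most $|X|$ values of $\sigma(S)$ giving a level $v$ visited roughly $k/|X|$ times), but the core of the construction does not go through, and the difficulty is not only the ``nested pigeonhole'' you defer. A hyper-derivation must be a \emph{regular} element $T=(T_1,\ldots,T_r)$ of $W^2(A)$, and regularity requires each block $T_i$ to be itself a regular element of $W(A)$: an initial constant run of its first letter followed by letters strictly larger in the total order $<$ on $A$. A concatenation of consecutive zero-weight excursions has no reason to have this shape (a block $(c,d,c,e)$ with $c<d,e$ is already not regular), and it cannot in general be forced to: excursion boundaries are dictated by the weight map $\omega$ (returns of the partial-sum walk to $v$), whereas block-regularity is dictated by the unrelated order $<$ on $A$. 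Your plan addresses only the type condition (common first letter) and the pattern $T_1=\cdots=T_q<T_{q+1},\ldots,T_r$, never the regularity of the individual blocks, and that is precisely the constraint that makes ``blocks of weight zero'' incompatible with ``blocks tiling a connected segment.'' Two further quantitative problems: excursions between consecutive visits to $v$ can be arbitrarily long (the intermediate partial sums may revisit $\sigma(S)\setminus\{v\}$ many times), so the count of $f(U,M)$ sequences of length at most $M$ does not control them; and retaining only the excursions with first letter $c$ destroys the consecutiveness of $\pi(T)$ in $S$, which you acknowledge but resolve only by promising to adapt Lemmas~\ref{LemmaObviousDerivations} and~\ref{LemmaDerivations} --- which is where the entire proof lives.

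The paper's proof is organised so as to avoid all of this: it never attempts to make the block weights zero. It applies Lemma~\ref{LemmaDerivations} (whose blocks are anchored at occurrences of the minimal letter of $\alpha(S)$, so regularity of the blocks is built in) to produce a derivation $T=(t_1,\ldots,t_{|X|+l+1})$ with one block more than needed, all blocks of the same type $(y,\ldots)$. The span decrease is then obtained by contradiction from the hypothesis that $\sigma(S)$ can be translated into $X$: if all blocks were equal, $\sigma(S)$ would contain a forbidden orbit $\mathcal{O}_{|X|}(r,\omega_1(t_1))$; and if truncating the last block did not strictly shrink the span, one would exhibit an orbit $\mathcal{O}(r,\omega(y))$ with $\omega(y)\in X$ inside $\sigma(S)$. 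To salvage your argument you would have to abandon the zero-weight blocks, define the blocks via the order on $A$, and recover the span drop by such a forbidden-orbit argument --- at which point you have reproduced the paper's proof rather than found an alternative to it.
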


\begin{proof} The map $f_{|X|}$ is monotone in its first argument, and $|\alpha(S)| \leq |\omega^{-1}(X)|$. The previous lemma then guarantees the existence of a derivation $T := (t_1,t_2,\ldots,t_{|X| + l},t_{|X| +l+1})$ of $S$ of length $|X| + l+1$. Note that the elements $t_1,t_2,\ldots$ are all regular of the same type. So we may suppose that they are all of the form $(y,\ldots)$, for some fixed $y \in \omega^{-1}(X)$. Define $U := (t_1,\ldots,t_{|X| + l}) \in W^2(\omega^{-1}(X))$. If $t_1 = t_2 = \cdots = t_{|X| + l}$, then $\sigma(T)$ contains an orbit of the form $\mathcal{O}_{|X|}(r,t_1)$, with $t_1 \in X$. Then also $\sigma(S)$ contains such an orbit, and this contradicts our assumption. So $U$ is a regular element of $W^2(\omega^{-1}(X))$. \newline

Note that $\sigma(U) \subseteq \sigma(T)$ and $|\sigma(T)| \leq |\sigma(S)|$. Suppose that $\sigma(U) = \sigma(T)$. If $u \in \sigma(U) = \sigma(T)$, then also $u + y \in \sigma(T) = \sigma(U)$. We conclude that $\sigma(U), \sigma(T)$ and $\sigma(S)$ contain an orbit of the form $\mathcal{O}(r,y)$. This contradicts our assumption on $\sigma(S)$. We conclude that $|\sigma(U)| < |\sigma(S)|$. \end{proof}

\subsection{A recursive upper bound}

\begin{definition}[Generalised Higman-map] \label{DefinitionGeneralisedHigmanMap} We define a map $\mapl{H}{\N \times \N \times \N}{\N}{(a,b,c)}{H_a(b,c)}$ by the boundary conditions $H_a(1,c) := 1$, $H_a(b,1) := a$ and, for $b,c >1$, by the recursion 
\begin{eqnarray*}
H_a(b,c) := &\max & \{ a \cdot f (c,a + H_{a}{(b-1,a \cdot f(c-1,H_{a}{(b,c-1)}))}+1) , \\
& &  H_a(b-1,c) , H_a(b,c-1)\}.
\end{eqnarray*}
By specialisation, we obtain the maps $\mapl{H}{\N \times \N}{\N}{(m,n)}{H_m(m,n)}$ and $\mapl{H}{\N}{\N}{n}{H_n(n,n) = H(n,n)}$. The latter is called the \emph{generalised Higman map}. \end{definition}

By construction: if $b' \leq b$ and $c' \leq c$, then $H_a(b',c') \leq H_a(b,c)$.

\begin{theorem}[Fullness, conditional on the content] \label{TheoremConditionalFullness} Consider a finite alphabet $\mathcal{A} := (A,<,\map{\omega}{A}{G},X)$ and $S \in W(A)$. If $ \| S \| \geq H_{|X|} ( \kappa(S)),$ then $S$ is full.\end{theorem}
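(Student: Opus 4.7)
The plan is the induction on the content $\kappa(S) \in \N \times \N$ (equipped with the lexicographic order $<_{\operatorname{Lex}}$) outlined in the strategy paragraph preceding the theorem, keeping $a := |X|$ and the ambient alphabet $\mathcal{A}$ fixed throughout. First I would dispatch the three ``trivial'' base regions. If $|\sigma(S)| > |X|$ then $\sigma(S)$ cannot be translated into $X$ because translations preserve cardinality, so $S$ is full in sense $(1)$. If $|\sigma(S)| = 1$ then every partial sum of $\omega$-weights vanishes, so $\omega(a_i) = 0$ for each $i$; arithmetic-freeness of $X$ forces $0 \notin X$ (else $\mathcal{O}(0,0) = \{0\} \subseteq X$ would violate it), so the singleton segment $(a_1) \subseteq S$ is the underlying sequence of the regular element $a_1 \in W^0(A)$, witnessing fullness in sense $(2)$. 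If $|\alpha(S)| = 1$ I may assume $S \in W(\omega^{-1}(X))$ (otherwise some $a_i$ with $\omega(a_i) \in G \setminus X$ already suffices), in which case $S$ is a constant sequence of length at least $H_a(b,1) = a$, so $\sigma(S) \supseteq \mathcal{O}_a(0,\omega(a_1))$; any translate into $X$ would produce $\mathcal{O}_a(h,\omega(a_1)) \subseteq X$ with $h,\omega(a_1) \in X$, contradicting arithmetic-freeness.

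For the inductive step, suppose $2 \leq b := |\sigma(S)| \leq a$ and $c := |\alpha(S)| \geq 2$, and as above assume $S \in W(\omega^{-1}(X))$ and that $\sigma(S)$ can be translated into $X$ (else we are done). Restricting the alphabet to $\alpha(S)$ turns $S$ into a sequence on an alphabet with $|\omega^{-1}(X)| = c$. Set
\[
l := H_a\!\left(b-1,\; a\cdot f(c-1,\, H_a(b,c-1))\right).
\]
The first clause of the recursion defining $H_a(b,c)$ gives $H_a(b,c) \geq a \cdot f(c,\, a+l+1)$, so Proposition \ref{PropositionHyperDerivations} supplies a hyper-derivation $T$ of $S$ with $\|T\| \geq l$. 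Regard $T$ as a sequence on the shifted alphabet $\mathcal{A}'$. Since $T$ is a hyper-derivation we have $|\sigma(T)| \leq b-1$, and the combinatorial structure of $T$ produced by Lemmas \ref{LemmaObviousDerivations}--\ref{LemmaDerivations} should yield the bound $|\alpha(T)| \leq a \cdot f(c-1,\, H_a(b,c-1))$. Thus $\kappa(T) <_{\operatorname{Lex}} \kappa(S)$, and by the monotonicity of $H_a$ in both arguments
\[
\|T\| \geq l \geq H_a(|\sigma(T)|,\, |\alpha(T)|) = H_a(\kappa(T)).
\]
The induction hypothesis applied to $T$ in $\mathcal{A}'$ then yields fullness of $T$, and Lemma \ref{LemmaFullShift} transfers this back to $S$.

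The main obstacle is justifying the bound $|\alpha(T)| \leq a \cdot f(c-1,\, H_a(b,c-1))$ in the shifted alphabet: one must unpack the construction of the hyper-derivation in Proposition \ref{PropositionHyperDerivations} to count the distinct coordinates of $T$ whose $\omega_1$-weight lies in $X$, using that all coordinates share a fixed initial letter $y \in \alpha(S)$ (so only $c-1$ letters are free for the remaining positions) and that each coordinate has span strictly smaller than $\sigma(S)$. The recursion defining $H_a$ was engineered precisely so that once this combinatorial count is in place the numerical inequalities close up; the two remaining terms in the $\max$ serve only to guarantee monotonicity of $H_a$ in both arguments, which the induction repeatedly invokes.
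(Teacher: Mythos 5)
Your induction scheme, base cases, and numerical bookkeeping all match the paper's proof, but the step you yourself flag as ``the main obstacle'' --- bounding the number of distinct letters of the derived sequence by $r := |X|\cdot f(\alpha-1,H_{|X|}(\sigma,\alpha-1))$ --- is the heart of the argument, and the route you propose for it would not work. Unpacking the construction of the hyper-derivation in Proposition \ref{PropositionHyperDerivations} gives you no control whatsoever over the \emph{lengths} of the individual coordinates $x_1,\ldots,x_l$ of $U$ (the segments produced by Lemmas \ref{LemmaObviousDerivations} and \ref{LemmaDerivations} can be arbitrarily long), hence no bound on how many distinct coordinates can occur; and the fact that each coordinate has span strictly smaller than $\sigma(S)$ bounds spans, not the number of words realizing them. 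In particular the factor $H_{|X|}(\sigma,\alpha-1)$ in $r$ cannot come out of the derivation construction: it is not a combinatorial invariant of $U$ at all.

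The missing idea is a second, self-referential use of the induction hypothesis: \emph{non-full sequences are short, hence there are few of them}. Each coordinate $Y$ of $U$ lies in $W(\{y\})\times W(\alpha(S)\setminus\{y\})$, so it splits as a head $Y_H$ on the single letter $y$ and a tail $Y_T$ on $\alpha-1$ letters. If any coordinate were full, then $S$ would already be full (by Lemma \ref{LemmaFullShift}, or directly since its underlying sequence is a segment of $S$ and its span translates into $\sigma(S)$); so one may assume every coordinate, and hence every head and tail, is non-full. The contrapositive of the theorem being proved then bounds $\|Y_H\| < H_{|X|}(\kappa(Y_H)) = |X|$ (the $\alpha=1$ boundary value) and, since $\kappa(Y_T) \leq_{\operatorname{Lex}} (\sigma,\alpha-1) <_{\operatorname{Lex}} \kappa(S)$, the induction hypothesis gives $\|Y_T\| < H_{|X|}(\sigma,\alpha-1)$. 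Counting head--tail pairs yields at most $|X|\cdot f(\alpha-1,H_{|X|}(\sigma,\alpha-1)) = r$ possible coordinates, which is exactly the set $B$ the paper restricts the shifted alphabet to. Without this step your induction does not close, so as written the proposal has a genuine gap.
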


\begin{proof} Let $S$ be given by $(a_1,\ldots,a_k)$, and let us prove the statement by induction on the content, $\kappa(S) \in (\N \times \N,<_{\operatorname{Lex}})$, of $S$. In order to simplify the formulae, we abbreviate $\sigma := |\sigma(S)|$ and $\alpha := |\alpha(S)|$. Suppose first that $\sigma = 1$.  Then $k \geq 1$, and $(a_1)$ is a regular segment of weight in $G \setminus X$, so that $S$ is full. Suppose next that $\alpha = 1$. Then $k \geq |X|$, and $\mathcal{O}_{|X|}(a_1,a_1) \subseteq \sigma(S)$. If $\omega(a_1) \in X$, then $\sigma(S)$ contains an orbit with increment in $X$, so that $S$ is full. If $\omega(a_1) \in G \setminus X$, then $(a_1)$ is a regular element of weight in $G \setminus X$, so that again $S$ is full. We may therefore assume that $\sigma,\alpha > 1$. \newline

We may assume even more: the letters $a_1,\ldots,a_k$ of $S$ are all in $\omega^{-1}(X)$, and $\sigma(S)$ can be translated into $X$, since otherwise $S$ is trivially full. We may then apply proposition \ref{PropositionHyperDerivations}, and obtain a hyper-derivation $U := (x_1,\ldots,x_l) \in W(W(\alpha(S)))$ of $S$ with length $ l \geq H_{|X|}{(\sigma-1,r)},$ where $r := |X| \cdot f(\alpha - 1,H_{|X|}(\sigma,\alpha-1))$. We may assume that none of the $x_j$ is full, since otherwise $S$ is again trivially full. \newline

In order to complete the induction, we need only find a subset $B$ of $W(A)$ of size $|B| \leq r$ such that $U \in W(B)$. Indeed: the finite alphabet $\mathcal{B} := (B,<',\map{\omega'}{B}{G},X)$ is a restriction of a shift of $\mathcal{A}$, so that lemma \ref{LemmaFullShift} applies. By definition, there is a $y \in \alpha(S)$ such that each $x_i$ is in $W(\{ y \}) \times W(\alpha(S) \setminus \{ y \}) \subseteq W(A)$. Let $B$ be the set of regular elements of $W(A)$ in $W(\{ y \}) \times W(\alpha(S) \setminus \{ y \})$ that are not full and that have type at most $(\sigma,\alpha)$. The elements $x_1,\ldots,x_l$ all belong to $B$, so that indeed $U \in W(B)$. \newline

Let $Y \in B$. Then it is of the form $Y = \pi((Y_H,Y_T))$, with $Y_H \in W(\{y\})$ and $Y_T \in W(\alpha(S) \setminus \{ y \})$. Since $Y$ is not full, neither is $Y_H$. Since $\alpha(Y_H) = 1$, we have $\| Y_H \| < H_{|X|}(\kappa(Y_H)) := |X|$. Similarly: since $Y$ is not full, neither is $Y_T$. Since $\kappa(Y_T) <_{\operatorname{Lex}} \kappa(Y)$, we may apply the induction hypothesis to obtain $\| Y_T \| < H_{|X|}(\kappa(Y_T))$. Since $\sigma(Y_T) \leq \sigma$ and $\alpha(Y_T) \leq \alpha-1$, we get $\| Y_T \| \leq H_{|X|}(\sigma,\alpha-1)$. We see that each $Y \in B$ draws from $|X|$-many heads and $ H_{|X|}(\sigma,\alpha-1)$-many tails, and conclude that $|B| \leq r$. This finishes the proof. \end{proof}

\begin{proof}(Of theorem \ref{TheoremCharacterisation}) If $|\sigma(S)| > |X|$, then $S$ is full for trivial reasons. Else, $|\sigma(S)| \leq |X|$ and $|\alpha(S)| \leq |\omega^{-1}(X)|$. Then $\kappa(S) \leq_{\operatorname{Lex}} (|X|,|\omega^{-1}(X)|)$, and $$ \| S \| \geq H ( |X| , |\omega^{-1}(X)|) := H_{|X|}(|X|,|\omega^{-1}(X)|) \geq H_{|X|}(\kappa(S)). $$ Theorem \ref{TheoremConditionalFullness} guarantees that $S$ is full. Lemma \ref{LemmaRegularityImpliesLieRegularity} then completes the proof. \end{proof}

\subsection{Walks in Cayley-graphs}

Our results can also be stated in the language of Cayley-graphs. Let us consider the Cayley-graph $\Gamma := \Gamma(G,X)$ of $(G,+)$ with respect to $X$. For every finite walk $W := (v_1,v_2,\ldots,v_{n+1}) $ in $ \Gamma$, we define its sequence of edges to be $$E(W) := (v_2 - v_1,v_3-v_2,\ldots,v_{n+1}-v_n) \in X^{n}.$$ 
We define the walk $W$ to be (Lie)-regular, iff $E(W)$ is (Lie)-regular. We obtain:

\begin{corollary} If $X$ is arithmetically-free, then every sufficiently long walk in $X \subseteq \Gamma(G,X)$ contains a (Lie)-regular walk from $a$ to $b$ with $b-a \not \in X.$ In particular: every sufficiently long walk in $\Z_p^\times \subseteq \Gamma(\Z_p,\Z_p^\times)$ contains a Lie-regular cycle. \end{corollary}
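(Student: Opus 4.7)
The plan is to deduce the corollary as a direct application of Theorem \ref{TheoremAlphabet} to the edge sequence of the walk. Suppose $W := (v_1, v_2, \ldots, v_{n+1})$ is a walk in $\Gamma(G, X)$ whose vertices all lie in $X$, and write $E(W) = (g_1, \ldots, g_n) \in X^n$ for its edge sequence. Since $v_1 + \sigma(E(W)) = \{v_1, v_2, \ldots, v_{n+1}\} \subseteq X$, the span $\sigma(E(W))$ \emph{can} be translated into $X$.

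I would then fix any total order $<$ on $X$ and consider the finite alphabet $(X, <, \iota, X)$, where $\iota$ is the inclusion $X \hookrightarrow G$, so that $\iota^{-1}(X) = X$. Provided $n \geq H_{|X|}(|X|, |X|) = H(|X|)$, Theorem \ref{TheoremAlphabet} applies to $E(W)$. Since the first (untranslatability) alternative has just been ruled out, the theorem produces a Lie-regular segment $(g_i, g_{i+1}, \ldots, g_j)$ of $E(W)$ with $g_i + \cdots + g_j \in G \setminus X$. Unwinding the definition of the edge sequence, the sub-walk $(v_i, v_{i+1}, \ldots, v_{j+1})$ is a (Lie)-regular walk from $a := v_i$ to $b := v_{j+1}$ with $b - a = g_i + \cdots + g_j \notin X$. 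For the specialisation $G := \Z_p$ and $X := \Z_p^\times$, one has $G \setminus X = \{\overline{0}\}$, so the condition $b - a \notin X$ forces $b = a$, and the Lie-regular sub-walk closes into a Lie-regular cycle based at $a$.

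The main (and essentially only) non-routine observation is that, for walks contained in $X$, the span of the edge sequence is automatically a translate of a subset of $X$ --- precisely the hypothesis needed to rule out the first alternative in the \emph{translatability} formulation of Theorem \ref{TheoremAlphabet}. Note that the weaker Theorem \ref{TheoremCharacterisation} with its initial-segment alternative would not suffice here, since an initial segment of $E(W)$ is not in general Lie-regular and therefore would not yield a (Lie)-regular sub-walk. Beyond this one observation, the proof is a direct unwinding of definitions.
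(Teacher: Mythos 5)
Your proof is correct and is precisely the intended derivation: the paper states this corollary without proof as an immediate consequence of Theorem \ref{TheoremAlphabet}, and your key observation --- that a walk staying inside $X$ forces $\sigma(E(W))$ to be translatable into $X$ (via the translate by $v_1$), so only the Lie-regular alternative of Theorem \ref{TheoremAlphabet} can occur --- is exactly the point, including your correct remark that the initial-segment alternative of Theorem \ref{TheoremCharacterisation} would not suffice. Nothing to add.
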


\paragraph{Thanks.} The author would like to thank his host, Efim Zel'manov, and Lance Small for their hospitality during the Erwin Schr\"odinger Research Programme (\emph{Representations and gradings of solvable algebras}: $2013-2015$, $J3371-N25$) at the University of California, San Diego. He would also like to express his gratitude to the Erwin Schr\"odinger International Institute for Mathematical Physics where preliminary work for the research was done (\emph{Lie algebras: deformations and representations}). Finally, he thanks the Geometric and Analytic Group Theory-group of the University of Vienna.

\end{document}